\documentclass[10pt,a4paper]{amsart}
\usepackage{amsmath}
\usepackage{amssymb}
\usepackage{amsopn}
\usepackage{amsfonts}
\usepackage{latexsym}
\usepackage{ifpdf}
	\ifpdf
	\usepackage[pdftex]{graphicx}
	\else
	\usepackage{graphicx}
	\fi

\theoremstyle{plain}
\newtheorem{Thm}{Theorem}
\newtheorem{Lem}{Lemma}
\newtheorem{Prop}{Proposition}
\newtheorem{Cor}{Corollary}
\theoremstyle{definition}
\newtheorem{Def}{Definition}
\newtheorem{Rmk}{Remark}

\newcommand{\Z}{\mathbb Z}

\newcommand{\N}{\mathbb N}

\setlength{\parskip}{1.8ex}

\author[Cornelis {\sc Kraaikamp}]{{\sc Cornelis} KRAAIKAMP}
\address{Cornelis {\sc Kraaikamp}\\
EWI, Delft University of Technology \\
Mekelweg 4, 2628 CD Delft \\
the Netherlands }
\email{c.kraaikamp@tudelft.nl}

\author[Thomas A. {\sc Schmidt}]{{\sc Thomas A.}
SCHMIDT}
\address{Thomas A. {\sc Schmidt}\\
Department of Mathematics\\
Oregon State University\\
Corvallis, OR 97331\\
USA}
\email{toms@math.orst.edu}

\author[Ionica {\sc Smeets}]{{\sc Ionica} SMEETS}
\address{Ionica {\sc Smeets}\\
Mathematical Institute\\
Leiden University\\
Niels Bohrweg 1, 2333 CA Leiden\\
 the Netherlands}
\email{ionica.smeets@gmail.com}
\thanks{The second author was supported in part by Bezoekersbeurs \textbf{B 040.11.083} of
the Nederlandse Organisatie voor Wetenschappelijk Onderzoek (NWO) and also thanks TU Delft for providing a stimulating atmosphere.} 
\keywords{Rosen fractions, natural extensions, entropy}
\subjclass[2000]{11K50, 37A45,   37A35}
\date{11 March, 2009}

\title[Quilting natural extensions for $\alpha-$ Rosen  Fractions]{Natural extensions for  \boldmath{$\alpha-$}Rosen Continued Fractions}

\setcounter{tocdepth}{2}
\usepackage{hyperref}

\begin{document}

\begin{abstract}
We give natural extensions for the $\alpha$-Rosen continued fractions of Dajani {\em et al}  for a set of small $\alpha$ values by appropriately adding and deleting rectangles from the region of the natural extension for the standard Rosen fractions.   It follows that  the underlying maps have equal entropy.
\end{abstract}

\maketitle

\section{Introduction}  
Dajani, Kraaikamp and Steiner \cite{DKS} introduced  a generalization of both the $\alpha$-continued fractions of Nakada \cite{N} and the Rosen continued fractions \cite{R}.   Using direct methods, similar to those of \cite{BKS} for the classical Rosen fractions, they determine in \cite{DKS} natural extensions for certain of their $\alpha$-Rosen fractions.

  Here we give a method that begins with the explicit region of the natural extension of a Rosen fraction (as already determined in \cite{BKS}) and determines the regions for the natural extensions for 
various $\alpha$-Rosen fractions.     One advantage of this approach is that one easily sees that these various $\alpha$-Rosen fraction maps determine isomorphic dynamical systems;  in particular the associated one-dimensional maps have the same entropy.     This can be compared to results on the entropy of Nakada's $\alpha$-continued fractions, obtained by Nakada \cite{N} and others \cite{MCM}, \cite{LM}, \cite{NN}.   

We define all notation below, here we simply note that  for each $q\in \Z, q \geq 3$ and all appropriate values of $\alpha$, one can define an  $\alpha$-Rosen continued fraction map $T_{\alpha}$ on $[\, \lambda(\alpha-1),\lambda\alpha\,)\,$.       When $\alpha= 1/2$, the map is the original map of Rosen;   \cite{DKS} 
showed that the domain of the natural extension is connected for all   $\alpha \in [\,1/2, 1/\lambda\,]\,$.    We determine the least $\alpha_0$ such that  for all $\alpha \in (\,\alpha_0, 1/\lambda\,]\,$ the   natural extension is connected.      

 We prove the following.
 %%%%%%%%%%%%%%%%%%%%%%%%%%%%%%%%%% 
\begin{Thm}\label{thmAnnounce}  Fix $q\in \Z, q \geq 4$ and $\lambda = \lambda_{q}= 2 \cos \frac{\pi}{q}$. \begin{itemize}
\item[(i.)] Let  
\[ \alpha_0 := \begin{cases}    \dfrac{\lambda^2-4+\sqrt{(4-\lambda^2)^2+4\lambda^2}}{2\lambda^2} &\text{if}\; q \; \text {is even},\\
\\
                               \dfrac{\lambda -2 +\sqrt{2\lambda^2-4\lambda+4}}{\lambda^2} &\text{otherwise}.
                                \end{cases}
\]      
Then $\left(\, \alpha_0, 1/\lambda \,\right]$ is the largest interval containing $1/2$ for which each 
domain of the natural extension of $T_{\alpha}$  is connected.     

\item[(ii.)]  Furthermore,  let 
\[ \omega_0 := \begin{cases}    1/\lambda &\text{if}\; q \; \text {is even},\\
                               \dfrac{\lambda-2 + \sqrt{\lambda^2 - 4 \lambda + 8}}{2 \lambda}&\text{otherwise}.
                                \end{cases}
\]   
Then  the entropy of the $\alpha$-Rosen map for each $\alpha \in \left[\, \alpha_0, \omega_0 \,\right]\,$ is equal to the entropy of the standard Rosen map. 
\end{itemize}
\end{Thm}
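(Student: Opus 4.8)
The plan is to realize the two-dimensional natural extension domain $\Omega_\alpha$ of $T_\alpha$ as the result of a measure-preserving surgery — a ``quilting'' — applied to the known planar domain $\Omega_{1/2}$ of the standard Rosen map determined in \cite{BKS}. Throughout I work with the natural extension map $\mathcal{T}_\alpha$, whose first coordinate is $T_\alpha$ and whose second coordinate records the inverse branch (the ``past''), and which preserves the measure $d\mu = (1+xy)^{-2}\,dx\,dy$ up to the usual $\lambda$-normalization. Since $\Omega_{1/2}$ is fibered over $[\,\lambda(\tfrac12-1),\lambda/2\,)$ with fibers that are finite unions of intervals, it is a finite union of horizontal rectangles. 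As $\alpha$ moves away from $1/2$ the base interval $[\,\lambda(\alpha-1),\lambda\alpha\,)$ slides, and the candidate $\Omega_\alpha$ should differ from $\Omega_{1/2}$ only by finitely many rectangles added on one side and deleted on the other.

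First I would write these rectangles down explicitly by following the forward $T_\alpha$-orbits of the two endpoints $\lambda(\alpha-1)$ and $\lambda\alpha$. These orbits, together with the admissibility (matching) relations for $\alpha$-Rosen digit strings, dictate exactly where cylinders appear or disappear relative to the $\alpha=1/2$ picture; because the combinatorics of Rosen fractions are governed by the Hecke group $G_q$, whose structure — in particular the extra symmetry present when $q$ is even — differs with the parity of $q$, the bookkeeping and the eventual closed forms bifurcate accordingly. Having produced a candidate $\Omega_\alpha$, I would verify that $\mathcal{T}_\alpha$ maps it bijectively onto itself modulo $\mu$-null sets; by the standard characterization this certifies that $(\Omega_\alpha,\mathcal{T}_\alpha,\mu)$ is the natural extension of $T_\alpha$ with its invariant density $\bar\mu_\alpha$.

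For part (i) I would then determine when the candidate $\Omega_\alpha$ is connected. The surgery removes rectangles from the region, and as $\alpha$ decreases the removed pieces grow until a final connecting ``bridge'' between two parts of the domain is pinched off; the critical $\alpha_0$ is the value at which that bridge has width zero. Solving the single resulting coincidence equation — the meeting of the positions of two adjacent rectangles, expressed through the endpoint-orbit data — yields the two closed forms for $\alpha_0$, one per parity class. Combined with the \cite{DKS} fact that $\Omega_\alpha$ is connected on $[\,1/2,1/\lambda\,]$, this shows $(\,\alpha_0,1/\lambda\,]$ is exactly the maximal interval about $1/2$ on which connectivity holds.

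For part (ii) the crucial point is that the added and deleted rectangles are not merely congruent but are carried onto one another by finitely many iterates of $\mathcal{T}_\alpha$ itself; the quilting is therefore a $\mu$-preserving map that \emph{intertwines} $\mathcal{T}_\alpha$ with the Rosen map $\mathcal{T}_{1/2}$, i.e.\ an isomorphism of the two natural-extension systems. An isomorphism preserves entropy, and a natural extension has the same entropy as its base map, so $h(T_\alpha)=h(T_{1/2})$ throughout the range where this clean, finite, non-overlapping surgery is valid; that range is precisely $[\,\alpha_0,\omega_0\,]$, with $\omega_0=1/\lambda$ for even $q$ and strictly smaller for odd $q$, beyond which a new family of rectangles must enter and the simple intertwining breaks down. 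The main obstacle throughout is the combinatorial bookkeeping: identifying the exact add/delete rectangles, proving the a.e.\ bijectivity of $\mathcal{T}_\alpha$ on the candidate domain, and checking that each deleted piece maps exactly onto an added piece — it is in discharging this last point that the measure preservation, and with it the entropy equality, is actually secured.
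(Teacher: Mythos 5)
Your overall strategy coincides with the paper's: quilt $\Omega_{1/2}$ into $\Omega_\alpha$ by adding and deleting rectangles, show the surgery conjugates $\mathcal{T}_\alpha$ to $\mathcal{T}_{1/2}$, and conclude equal entropy because isomorphic natural extensions have equal entropy and a natural extension has the entropy of its base map. But the proposal leaves unproved exactly the point you yourself flag as ``the main obstacle'': why each deleted piece is carried onto an added piece after \emph{finitely many} steps. This is not bookkeeping; it is the heart of the proof, and it is secured by a specific mechanism your plan never identifies, namely synchronization of the two endpoint orbits after the \emph{same} number of steps. Writing $l_n = T_\alpha^n(\lambda(\alpha-1))$ and $r_n = T_\alpha^n(\alpha\lambda)$, the paper proves $l_p = r_p$ for $q = 2p$ (and $l_{2h+2} = r_{2h+2}$ for $q = 2h+3$) by explicit matrix computations in the Hecke group, using the recursion $B_n = \lambda B_{n-1} - B_{n-2}$ and the relations forced by $U^q = \mathrm{Id}$; a digit lemma ($l_{p-1}$ and $r_{p-1}$ lie in cylinders of digits $d$ and $d-1$ respectively, so their images differ by an integer multiple of $\lambda$ lying in an interval of length $\lambda$, hence coincide) plus the observation that the residual layers of $\mathcal{T}^{p-1}_\alpha(A_0)$ and $\mathcal{T}^{p-1}_\alpha(D_0)$ decompose into full cylinders with matching images then yields $\mathcal{T}^p_\alpha(A_0) = \mathcal{T}^p_\alpha(D_0)$. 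Moreover, $\alpha_0$ is not found by a separate ``bridge-width'' computation: it is precisely the threshold for the digit pattern of $l_0$ and $-r_0$ (beginning $(-1:1)^{p-1}$ for even $q$, resp. $(-1:1)^h(-1:2)(-1:1)^h$ for odd $q$) that makes this synchronization go through, i.e.\ the inequality $r_{p-2} > -\delta_1$ rewritten in $\alpha$. Disconnectedness at $\alpha = \alpha_0$ then comes essentially for free: there $r_{p-2} = -\delta_1$ exactly, so the deleted strip at step $p-1$ spans the full width $[0,\alpha\lambda)$ and can never be quilted over.

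Second, your account of $\omega_0$ is wrong in mechanism, and this matters for proving part (ii) on $(1/2,\omega_0\,]$. For odd $q$ and $\alpha > \omega_0$ it is \emph{not} that ``a new family of rectangles must enter'': the surgery still closes up, but staggered --- one has $l_k = r_{k'}$ with $k' = k+1$, hence $\mathcal{T}^k_\alpha(A_0) = \mathcal{T}^{k+1}_\alpha(D_0)$ with distinct exponents (Theorem 2.9 of \cite{DKS}). The paper shows, by inducing past a copy of $A_0$ and applying Abramov's formula, that such staggered matching forces strictly \emph{different} entropy; so entropy equality cannot be argued from ``validity of the surgery'' alone. What must be verified on all of $[\,\alpha_0,\omega_0\,]$ is same-exponent matching, which for $\alpha \in (1/2,\omega_0\,]$ the paper extracts from Theorems 2.2 and 2.9 of \cite{DKS}. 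Without isolating ``same $k$ versus shifted $k$'' as the dividing criterion, your plan has no way to compute $\omega_0$, nor to explain why the entropy claim is sharp there while connectivity persists up to $1/\lambda$.
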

%%%%%%%%%%%%%%%%%%%%%%%%%%%%%%%%%% 
(Here, ``the domain of the natural extension of $T_{\alpha}$'' refers  to the largest region on which  the standard number theoretic map --- defined below in Equation ~\eqref{def: T(x,y)_alpha} --- is bijective.)
The value of the entropy of the standard Rosen map was given by H.~Nakada \cite{N2}, and is equal to 
\[  C \cdot \dfrac{(q-2) \pi^2}{2 q}\;,\]
where $C$ is the normalizing constant (which depends on the parity of the index $q$) found by \cite{BKS}.  We recall the value of $C\,$  in Equations \eqref{evenNormConst} and \eqref{oddNormConst}.

%%%%%%%%%%%%%Figure 1%%%%%%%%%%%%%%%%%%%%%%%%%
\begin{figure}[!ht]
\scalebox{0.9}{\includegraphics[height=40mm]{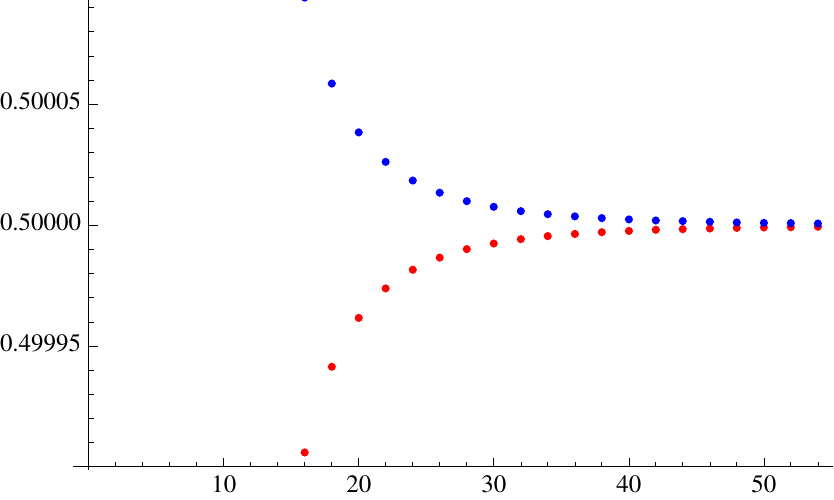}}
\scalebox{0.9}{\includegraphics[height=40mm]{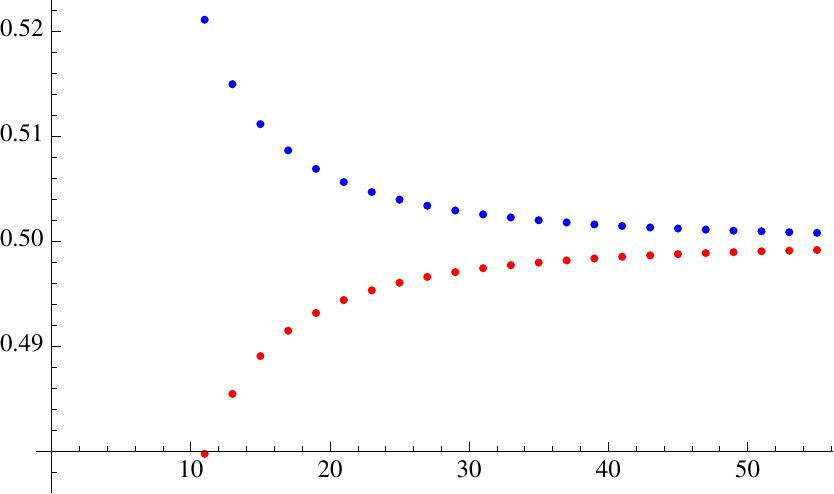}}
\caption{Some values of $\alpha_0$ and of $\omega_0\,$.  Even index on left, odd index on right.}
\label{alphaBdsFig}
\end{figure}
%%%%%%%%%%%%%%%%%%%%%%%%%%%%%%%%%%%%%%%%%%%%%%%%%%%%%%%%%%%%%%%%%%%%%%%%%%%%%%%%%%%%%%%%%%%

Our approach is to determine the domain of the natural extension using a standard ``number theoretic'' two-dimensional operator, here  $\mathcal T_{\alpha}\,$ (depending on both $\lambda$ and $\alpha$).   The domain for Rosen fractions, denoted here by $\Omega_{1/2}\,$, was obtained in   \cite{BKS}.       For fixed $\lambda$ and given $\alpha$, we determine our domain by adding and deleting various regions to $ \Omega_{1/2}$ by a process that we informally refer to as ``quilting''.     The regions are determined from appropriate orbits of the strips in $\Omega_{1/2}$ above intervals where the $T_{\alpha}$-``digits'' (see below) differ from the $T_{1/2}$-digits.    These strips are mapped by $\mathcal T_{1/2}$ to a region that must be deleted, their image under  $\mathcal T_{\alpha}$ must be added;   thereafter their further  $\mathcal T_{\alpha}$-orbits are deleted and added, respectively.        Our approach succeeds without great difficulty exactly because in a small number of steps the orbits of the added regions agree with the orbits of the deleted regions  --- the infinitely many potential holes are quilted over by the added regions.

\subsection{Outline of Paper}   The following two subsections complete this Introduction by giving basic notation and then defining our building blocks, the basic deleted and added regions.   In Section ~\ref{secQuiltGood} we sketch a main argument of our approach --- when the orbits of these basic regions agree after the same number of steps,  entropy is preserved.  We give an example of our techniques in  Section ~\ref{classCaseSec} by re-establishing known results for certain classical Nakada $\alpha$-fractions.    In Sections ~\ref{SecEven} and ~\ref{SecOdd} we give the proof of Theorem \ref{thmAnnounce}, in the even and odd index case, respectively.    Finally, in Section ~\ref{secDKSlargeAlpha}  we indicate how our results can be extended to  show that in the odd index case,  the entropy of $T_{\alpha}$ decreases when  $\alpha > \omega_0\,$.

\subsection{Basic Notation} 
\subsubsection{One dimensional maps}
Let $q\in \Z, q \geq 3$ and $\lambda = \lambda_{q}=\cos \frac{\pi}{q}$. For $\alpha \in \left[\,0, \frac{1}{\lambda}\,\right],$ we let $\mathbb I_{\alpha} : = [\, \lambda(\alpha-1),\lambda\alpha\,)\,$ and define the map $T_{\alpha}:\mathbb I_{\alpha} \to \mathbb I_{\alpha}$ by
%%%%%%%%%%%%%%%%%
\begin{equation}
\label{def: Talpha}
T_{\alpha}(x) := \left| \frac{1}{x} \right| - \lambda \left \lfloor\,  \left| \frac{1}{\lambda x} \right| + 1 -\alpha \right\rfloor, \textrm{ for } x\neq 0; \, T_{\alpha}(0):=0.
\end{equation}
%%%%%%%%%%%%%%%%%
For  $x \in\mathbb I_{\alpha}\,$,  put $d(x) := d_{\alpha}(x) = \left\lfloor \left| \frac{1}{\lambda x} \right| + 1 -\alpha \right\rfloor$ and  
$\varepsilon(x) :=\rm{sgn}(x)$. Furthermore, for $n \geq 1$ with $T_{\alpha}^{n-1}(x) \neq 0$ put
\[
\varepsilon_{n}(x)=\varepsilon_n = \varepsilon(T^{n-1}_\alpha (x)) \textrm{ and } d_n(x)=d_n=d(T^{n-1}_\alpha (x)).
\]
This yields the {\em $\alpha$-Rosen continued fraction}  of $x\,$:  
\[
x = \displaystyle{\frac{\varepsilon_1}{d_1\lambda +  \displaystyle{\frac{\varepsilon_2}{d_2\lambda + \dots}}}} =: [\varepsilon_1:d_1,\varepsilon_2:d_2,\dots],
\]
where $\varepsilon \in \{ \pm 1\}$ and $d_i \in \N$.    Fixing $\alpha = \frac{1}{2}\,$,  results in the Rosen fractions.   On the other hand, fixing  $q=3\,$ and considering general $\alpha\,$, we have Nakada's $\alpha$-expansions.   These include   the regular continued fractions, given by  $\alpha=\frac1\lambda=1\,$   and the nearest integer continued fractions, given  by  $\alpha=\frac12\,$.\\

\subsubsection{Dynamics}
When studying the dynamics of these maps, the orbits of the interval endpoints  of $\mathbb I_{\alpha}$ are of utmost importance.  We define
\[ 
\begin{aligned}
l_0&=(\alpha-1)\lambda &\textrm{ and }&\; l_n=T^n_\alpha(l_0), \textrm{ for }n\geq1\,,\\
r_0&=\; \alpha \lambda      &\textrm{ and }& \; r_n=T^n_\alpha(r_0), \textrm{ for }n\geq1\,.
\end{aligned}
\]

The {\em cylinders} for the map $T_{\alpha}$ are 
\[\Delta(\varepsilon : r) = \{ \, x \,|\, \text{sgn}(x) = \varepsilon\, \text{and}\, d_{\alpha}(x) = r\,\}\,.\]  
Letting 
%%%%%%%%%%%%%%%%%
\begin{equation}
\label{def: delta}
 \delta_r := \dfrac{1}{\lambda ( r + \alpha)}\;,
 \end{equation}
%%%%%%%%%%%%%%%%%
 for  $r$ sufficiently large, we have {\em full} cylinders (each mapped surjectively by $T_{\alpha}$ onto $\mathbb I_{\alpha}\,$)  of the form
\[ \Delta(+1:r) = (\, \delta_r, \delta_{r-1}\,]\;\; \text{and}\;  \Delta(-1:r) = [\, -\delta_{r-1}, -\delta_{r}\,)   \;.\]

For future reference, we introduce notation for strips that fiber over cylinders:  let 
\[\mathcal D(\varepsilon: r) := \{\,(x,y) \;\vert\; x \in  \Delta(\varepsilon : r)\,\}\,.\]

The standard number theoretic planar map associated to continued fractions gives here 
%%%%%%%%%%%%%%%%%
\begin{equation}
\label{def: T(x,y)_alpha}
\mathcal{T_\alpha}(x,y) = \left( T_\alpha(x), \frac{1}{r\lambda + \varepsilon y} \right)\;;
 \end{equation}
%%%%%%%%%%%%%%%%%
it is easily checked that this map has invariant measure 
\[ d\mu = \dfrac{dx\, dy}{(1 + xy)^2}\;.\]

\subsubsection{Hecke groups}  
 Rosen defined his continued fractions in order to study aspects of the Hecke groups, $G_q\subset \text{PSL}(2, \mathbb R)\,$.   With fixed index $q$ as above, let 

\begin{equation}\label{matrices}
S \, =\, 
\left[ \begin{array}{cc}
	1   &  \lambda\\
 	0   &  1
\end{array}\right], \,
 T \, =\, 
 \left[ \begin{array}{cc}
        0   &  -1\\
        1   &  0
\end{array}\right] 
\textrm{ and } \,
U =\left[ \begin{array}{cc}
	\lambda   & -1 \\
 	1  &  0
\end{array}\right] \;.
\end{equation}
Then $G_q$ is generated by any two of these, as $U = ST\,$.  In fact,  $U^q = \text{Id}$\,.  
It turns out that the $T_{1/2}$ orbit of $\lambda/2$ is   exactly the orbit of $\infty$ under powers of $U$: 
 $r_j =  U^{j+1}(\infty)\,$, where as usual we use the M\"obius (or, fractional linear) action of $2 \times 2$ matrices on the reals (extended to include $\infty$, as necessary).      From~\cite{BKS} we know
$
U^n =\left[ \begin{array}{cc}
B_{n+1} & -B_n\\
B_n & -B_{n-1}
\end{array}\right]\,
$
where the  sequence $B_n$ is 
\begin{equation}
\label{eq: Bn }
B_0 =0, \quad B_1=1, \quad B_n=\lambda B_{n-1}-B_{n-2}, \quad \textrm{ for } n=2,3,\dots \, .
\end{equation}

 \subsection{Regions of changed digits; basic deletion and addition regions}
Fix $\lambda$ and choose some $\alpha \in [\,  0,  \frac1\lambda \,]\,$.   
For $\alpha$ in the range that we consider, our intention is to {\em solve} for the region of the natural extension of $T_{\alpha}$,   using the operator 
$\mathcal T_{\alpha}\,$.  Of course, for many points $(x,y) \in \Omega_{1/2}$, we have $\mathcal T_{\alpha}(x,y) = \mathcal  T_{1/2}(x,y)\,$.  It is this last fact that we exploit,   and find that it is important 
to understand $\mathcal  T_{\alpha}(x,y)$ for all points where this value differs from $\mathcal  T_{1/2}(x,y)\,$.

%%%%%%%%%%%%%%%%%%%%%%%%%%%%%%%%%% 
 \begin{Def}\label{changeDigDef}    The {\em region of changed digits} is 
 \[ C = \{ \, (x,y) \in \Omega_{1/2} \;\;\vert\;\;   x \in \mathbb I_{\alpha} \cap \mathbb I_{1/2} \; \text{and}\;     d_{\alpha}(x)  \neq   d_{1/2}(x) \;\}\,.  \]
 \end{Def} 
%%%%%%%%%%%%%%%%%%%%%%%%%%%%%%%%%% 
 
 The region $C$ is a disjoint union of rectangles;  in general,  for each digit $d$,  the subset of $C$ determined by $x$ whose digit has changed to $d_{\alpha}(x)$ consists of two connected components, one with $x$ negative, the other with positive $x$ values.    See Figure ~\ref{figChangeDigs} for a schematic representation of this in the classical $\lambda = 1$ setting.

 \bigskip
 
 We also identify a region of $\Omega_{1/2}$ that obviously cannot be part of the new natural extension, as its marginal projection lies outside of $\mathbb I_{\alpha}\,$.   
 %%%%%%%%%%%%%%%%%%%%%%%%%%%%%%%%%% 
  \begin{Def}  The basic {\em deleted region} is the $\mathcal T_{1/2}$-image of the region of changed digits:
 \[ D_0 :=  \mathcal T_{1/2}(\, C\,)\;.\]
  \end{Def}
%%%%%%%%%%%%%%%%%%%%%%%%%%%%%%%%%% 

%%%%%%%%%%%%%%%%%%%%%%%%%%%%%%%%%% 
 \begin{Lem}\label{buildDeletion}  For $\alpha < 1/2\,$, the basic deleted region is  
   \[D_0 = \{ \,(x,y) \in \Omega_{1/2} \;|\;  x  > \alpha \lambda\,\}\;.\]
 \end{Lem}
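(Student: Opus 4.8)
The plan is to avoid describing the image $\mathcal{T}_{1/2}(C)$ fibre-by-fibre, and instead to recognise $C$ itself as a $\mathcal{T}_{1/2}$-preimage, after which the bijectivity of $\mathcal{T}_{1/2}$ on $\Omega_{1/2}$ (the defining property of the natural extension recalled from \cite{BKS}) does the real work. Concretely, I would first make the membership condition in Definition \ref{changeDigDef} completely explicit by comparing the two floor functions. Writing $t = \left| \tfrac{1}{\lambda x}\right|$ and $m = d_{1/2}(x) = \lfloor t + \tfrac12\rfloor$, the hypothesis $\alpha < \tfrac12$ forces $1-\alpha > \tfrac12$, so $d_\alpha(x) = \lfloor t + 1 - \alpha\rfloor$ is never smaller than $m$ and exceeds it by at most $1$; a one-line check shows $d_\alpha(x) = m+1$ exactly when $t \in [\,m+\alpha,\, m+\tfrac12\,)$.

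On such an $x$ one has $T_{1/2}(x) = |1/x| - m\lambda = \lambda(t-m)$, so the changed-digit condition $d_\alpha(x)\neq d_{1/2}(x)$ is equivalent to $T_{1/2}(x) \geq \alpha\lambda$. Since the first coordinate of $\mathcal{T}_{1/2}(x,y)$ is $T_{1/2}(x)$ and does not involve $y$, this identifies
\[ C = \{\,(x,y)\in\Omega_{1/2} \;\vert\; T_{1/2}(x) > \alpha\lambda\,\} = \mathcal{T}_{1/2}^{-1}\big(\{\,(x',y')\in\Omega_{1/2}\;\vert\;x' > \alpha\lambda\,\}\big)\,, \]
where the boundary locus $\{T_{1/2}(x) = \alpha\lambda\}$ is negligible. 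Because $\mathcal{T}_{1/2}$ is a bijection of $\Omega_{1/2}$, applying it to both sides yields $D_0 = \mathcal{T}_{1/2}(C) = \{(x,y)\in\Omega_{1/2}\;\vert\;x > \alpha\lambda\}$, as claimed.

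The step that needs genuine care — and which I expect to be the main obstacle — is justifying that the clause ``$x\in \mathbb{I}_\alpha\cap\mathbb{I}_{1/2}$'' in the definition of $C$ is not actually binding, i.e. that the whole set $\{x\in\mathbb{I}_{1/2} \;\vert\; T_{1/2}(x) > \alpha\lambda\}$ already lies in $\mathbb{I}_\alpha$. On the negative side this is free, since $\mathbb{I}_\alpha \supset [\,-\lambda/2,0\,)$ when $\alpha<\tfrac12$. On the positive side one must rule out contributions from $[\,\lambda\alpha,\lambda/2\,) = \mathbb{I}_{1/2}\setminus\mathbb{I}_\alpha$, which sit in the small-digit cylinder at the right end of $\mathbb{I}_{1/2}$, where the uniform interior computation of the first two paragraphs no longer applies. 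This reduces to checking the single inequality $T_{1/2}(\lambda\alpha)\le\alpha\lambda$, equivalently $\lambda^2\alpha(1+\alpha)\ge 1$, which holds throughout the range of $\alpha$ relevant to Theorem \ref{thmAnnounce} (it is implied by $\alpha\ge\alpha_0$). Controlling these endpoint cylinders — in effect invoking the finiteness of the $T_{1/2}$-orbits of the interval endpoints established in \cite{BKS} — is the only place where the bulk ``one floor versus the other'' argument must be supplemented.
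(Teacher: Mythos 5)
Your proof is correct on the range of $\alpha$ that actually matters, and its skeleton agrees with the paper's: identify $C$ (up to a null set) with the $\mathcal T_{1/2}$-preimage of the strip $\{\,x > \alpha\lambda\,\}$, then push forward by the a.e.\ bijection $\mathcal T_{1/2}$ of $\Omega_{1/2}$ from \cite{BKS}. The difference lies in how the identification is made, and in one step you include that the paper omits. The paper argues cylinder by cylinder: each component of the projection of $C$ sits at the inner end of a $T_{1/2}$-cylinder (right end for $x<0$, left end for $x>0$), and monotonicity of $T_{1/2}$ on each cylinder carries that component onto the top segment $[\,\alpha\lambda, \lambda/2\,)$ of $\mathbb I_{1/2}$. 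Your uniform floor-function computation, $d_{\alpha}(x) = d_{1/2}(x)+1 \iff T_{1/2}(x) \ge \alpha\lambda$, compresses all of those cases into one line and is arguably closer to the heart of the matter. More importantly, your final paragraph supplies a verification the paper's proof silently skips: one must rule out points of $\mathbb I_{1/2}\setminus\mathbb I_{\alpha} = [\,\lambda\alpha, \lambda/2\,)$ being mapped above $\alpha\lambda$, since such points are excluded from $C$ by definition but not by the preimage description. This is not pedantry: for small $\alpha$ (say $q=4$, $\alpha = 0.3$, where $\lambda^2\alpha(1+\alpha) = 0.78 < 1$) one has $\delta_1(\alpha) > \lambda\alpha$, the portion $(\,2/(3\lambda),\, \delta_1(\alpha)\,)$ of the rightmost $T_{1/2}$-cylinder contains points of $[\,\lambda\alpha,\lambda/2\,)$ mapping above $\alpha\lambda$, and the stated equality $\mathcal T_{1/2}(C) = \{\,x>\alpha\lambda\,\}$ genuinely fails; so the hypothesis ``$\alpha < 1/2$'' in the Lemma must be read together with the paper's standing assumption that $\alpha$ lies in the range under consideration. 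Your inequality $\lambda^2\alpha(1+\alpha)\ge 1$ is exactly the right condition, and your assertion that it follows from $\alpha \ge \alpha_0$ is correct for both parities (the two thresholds coincide precisely at $q=4$, and $\alpha_0$ is the stronger requirement for all larger $q$), so your argument both recovers the Lemma and makes its true domain of validity explicit.
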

%%%%%%%%%%%%%%%%%%%%%%%%%%%%%%%%%% 
 
 \begin{proof}   The projection of $C$ to the real line is a disjoint union of intervals.     The boundaries of the  cylinders for the map $T_{z}$ are determined by the values $\frac{1}{\lambda(d+ z)}\,$;  thus,  each of the connected components of this projection lies at the end of a cylinder for $T_{1/2}$.   Since for fixed $d$ the function   $z \mapsto \frac{1}{\lambda(d+ z)}$ is decreasing, 
 the connected components lie at the right end of cylinders for negative $x$,    and at the left end of cylinders of positive $x$ values.   But,   $T_{1/2}$ itself is an increasing function for negative $x$ and a decreasing function for positive $x\,$.    Thus,  this projection is exactly the $T_{1/2}$-preimage of $(\,\alpha \lambda, \lambda/2\,]\,$.     From this, it easily follows that  $\mathcal T_{1/2}(\, C\,) =  \{ \,(x,y) \in \Omega_{1/2} \;|\;  x  > \alpha \lambda\,\}\,$.   
  \end{proof} 
  
  %%%%%%%%%%%%%%%%%%%%%%%%%%%%%
\begin{Rmk}
 For $\alpha > 1/2\,$, one finds that $D_0 = \{ \,(x,y) \in \Omega_{1/2} \;|\;  x < (\alpha -1) \lambda \,\}\;$.
  \end{Rmk}   
%%%%%%%%%%%%%%%%%%% 

We are ready to define the region that  in fact is the domain for the natural extension of $T_{\alpha}\,$.  
 \begin{Def}  Let  
 \[\Omega_{\alpha}  :=  \bigg( \, \Omega_{1/2}  \,  \bigcup \, \cup_{k=1}^{\infty} \, \mathcal T^{k}_{\alpha}(C)  \,\bigg) \setminus \bigg(  \cup_{k= 0}^{\infty}\,\mathcal T^{k}_{\alpha}(\,D_0\,) \setminus \bigg( \cup_{k= 0}^{\infty}\,\mathcal T^{k}_{\alpha}(\,D_0\,) \bigcap \,  \cup_{k=1}^{\infty} \, \mathcal T^{k}_{\alpha}(C) \,\bigg)\;\bigg) \;.\]
 \end{Def}
 \noindent
Thus,   $\Omega_{\alpha}$ is created by  adding to $\Omega_{1/2}$ all images of the regions of changed digits and then deleting all images of the basic deleted region that are not contained in the added regions.

   By analogy informed by Lemma \ref{buildDeletion}, we define the {\em basic added region} to be 
  \[ A_0 :=  \mathcal T_{\alpha}(\, C \,)\;.\]

%%%%%%%%%%%%%%%%%%%%%%%%%%%%%%%%%% 
\section{Successful quilting results in equal entropy}\label{secQuiltGood}     We sketch here a main argument for our approach.    Indeed, the results of this section show that   we will have proven part (ii) of Theorem ~\ref{thmAnnounce} once we show that  for each  $\alpha \in \left[\, \alpha_0, \omega_0 \,\right]\,$ there exists some   $k$ satisfying  the hypotheses of Proposition ~\ref{quiltGivesIso}.

We mildly informally let $\mu$ denote the probability measure on $\Omega_{\alpha}$ induced by  $d\mu = \dfrac{dx\, dy}{(1 + xy)^2}\,$.   Let $\overline{\mathcal B}_{\alpha}$  denote the Borel $\sigma$-algebra of  $\Omega_{\alpha}\,$.  
%%%%%%%%%%%%%%%%%%%%%%%%%%%%%%%%%% 
\begin{Prop}\label{quiltGivesIso}   Fix $q\in \Z, q \geq 3$ and $\lambda = \lambda_{q}= 2 \cos \frac{\pi}{q}\,$, and choose some $\alpha \in [\,0, 1/\lambda\,]$.    Let $A_0$ and $D_0$ be defined as above.   Suppose that there is some natural number $k$ such that 
\[\mathcal T^{k}_{\alpha}(A_0) = \mathcal T^{k}_{\alpha}(D_0)\,.\]
  Then  $(\mathcal T_{\alpha}, \Omega_{\alpha},\overline{ \mathcal B}_{\alpha}, \mu\,)$ is isomorphic to 
$(\mathcal T_{1/2}, \Omega_{1/2}, \overline{\mathcal B}_{1/2}, \mu\,)\,$.
\end{Prop}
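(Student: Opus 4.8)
The plan is to produce an explicit measure-preserving bijection $\Phi : \Omega_{1/2} \to \Omega_\alpha$ that conjugates the two maps, i.e. $\Phi \circ \mathcal T_{1/2} = \mathcal T_\alpha \circ \Phi$ off a $\mu$-null set; once such a $\Phi$ is in hand the two systems are isomorphic by definition, and as a bonus $\mathcal T_\alpha = \Phi \circ \mathcal T_{1/2} \circ \Phi^{-1}$ is automatically a $\mu$-preserving bijection of $\Omega_\alpha$, so I never have to check bijectivity of $\mathcal T_\alpha$ by hand. I would first record the two standing facts that make everything balance: (a) the Jacobian computation showing that $\mathcal T_\alpha$, like $\mathcal T_{1/2}$, preserves $d\mu = dx\,dy/(1+xy)^2$ (the usual verification for a map of the form \eqref{def: T(x,y)_alpha}); and (b) the resulting identities $\mu(C) = \mu(A_0) = \mu(D_0)$ together with $\mu(\mathcal T^j_\alpha(A_0)) = \mu(\mathcal T^j_\alpha(D_0)) = \mu(C)$ for every $j \ge 0$, so that whatever is deleted and added matches in measure layer by layer. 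I may also assume $\alpha < 1/2$, the case $\alpha > 1/2$ being symmetric (the Remark after Lemma~\ref{buildDeletion}) and $\alpha = 1/2$ trivial.

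Next I would use the hypothesis to collapse the two a priori infinite families of regions to finite towers. Writing $\mathcal A = \bigcup_{j\ge 0}\mathcal T_\alpha^j(A_0)$ for the added regions and $\mathcal D = \bigcup_{j\ge 0}\mathcal T_\alpha^j(D_0)$ for the candidate-deleted regions, the identity $\mathcal T^k_\alpha(A_0) = \mathcal T^k_\alpha(D_0)$ propagates to $\mathcal T^{k+j}_\alpha(A_0) = \mathcal T^{k+j}_\alpha(D_0)$ for all $j \ge 0$; hence every deleted layer of index $\ge k$ coincides with an added layer and so lies in $\mathcal D \cap \mathcal A$, which the definition of $\Omega_\alpha$ retains. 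Thus the genuine modification $\Omega_\alpha \triangle \Omega_{1/2}$ is confined to the finite towers $A_0, \dots, \mathcal T^{k-1}_\alpha(A_0)$ and $D_0, \dots, \mathcal T^{k-1}_\alpha(D_0)$, both capped by the common region $R := \mathcal T^k_\alpha(A_0) = \mathcal T^k_\alpha(D_0)$. This is where the quilting closes up: no hole propagates to infinity.

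I would then define $\Phi$ to equal the identity off the deletion tower and to carry the deletion tower onto the addition tower. Using that $\mathcal T_{1/2}$ is a bijection of $\Omega_{1/2}$ (from \cite{BKS}), I write each point of the deletion tower as $\mathcal T_{1/2}^n(c)$ with $c \in C$ and $1 \le n \le k$, and set $\Phi(\mathcal T_{1/2}^n(c)) := \mathcal T_\alpha^n(c)$, with $\Phi = \mathrm{id}$ elsewhere (in particular $\Phi$ fixes $C$, which launches the tower). The intertwining $\Phi \circ \mathcal T_{1/2} = \mathcal T_\alpha \circ \Phi$ then holds layer by layer on the tower by construction, and off the tower it holds because there $\mathcal T_\alpha$ and $\mathcal T_{1/2}$ coincide: away from $C$ the two digit functions agree and the common value of $x$ lies in $\mathbb I_\alpha$. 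Step one gives $\mu(\mathcal T_{1/2}^n(C)) = \mu(\mathcal T_\alpha^n(C))$ for each $n$, so $\Phi$ preserves $\mu$ layer by layer, and since after $k$ steps both towers terminate in the single region $R$, the map $\Phi$ closes up into a bijection of $\Omega_{1/2}$ onto $\Omega_\alpha$.

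The main obstacle, and the only place real work is needed, is the book-keeping at the seams: one must check that the representation $\mathcal T_{1/2}^n(c)$ is unique (so $\Phi$ is well defined), that the added layers tile the deleted layers with neither overlap nor gap (so $\Phi$ is onto $\Omega_\alpha$), and that the top of the addition tower glues to $R$ exactly as the top of the deletion tower does. All of this is forced by the single hypothesis $\mathcal T^k_\alpha(A_0) = \mathcal T^k_\alpha(D_0)$, which asserts precisely that the two towers have equal height and identical caps, so that deleting $\mathcal D \setminus \mathcal A$ and adjoining $\mathcal A \setminus \Omega_{1/2}$ are inverse operations up to the relabelling $\Phi$. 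Verifying that the finitely many rectangles match exactly, rather than merely in measure, is the concrete parity-dependent computation carried out in the later sections; here it has been isolated into the hypothesis of the proposition.
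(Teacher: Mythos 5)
Your proposal is correct and is essentially the paper's own proof: the paper likewise defines the isomorphism to be the identity off the finite deletion tower and an orbit transport onto the addition tower (written there as $f := \mathcal T_{\alpha}^{j+1} \circ \mathcal T_{1/2}^{-1}\circ \mathcal T_{\alpha}^{-j}$ on $\mathcal T_{\alpha}^{j}(D_0)$, which is exactly your $\Phi = \mathcal T_\alpha^n\circ\mathcal T_{1/2}^{-n}$ once points are pulled back to $C$), invoking bijectivity of $\mathcal T_{1/2}$ from \cite{BKS}, injectivity of $\mathcal T_\alpha$ on the tower, and $\mu$-preservation of both planar maps to conclude. The only difference is cosmetic: the paper indexes the deleted layers by $\mathcal T_\alpha$-iterates of $D_0$ while you index them by $\mathcal T_{1/2}$-iterates of $C$; since the two maps coincide on the layers below the cap, the towers, and hence the two constructions, agree.
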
 
%%%%%%%%%%%%%%%%%%%%%%%%%%%%%%%%%% 

\begin{proof} By \cite{BKS}, $\mathcal T_{1/2}$  is bijective (up to $\mu$-measure zero) on $\Omega_{1/2}\,$; one easily verifies that $\mathcal T_{\alpha}$ is injective onto $\cup_{j=0}^{k-1}\; \mathcal T_{\alpha}^{j}(\, D_0\,)\,$.   Therefore, we can define   $f: \Omega_{1/2} \to \Omega_{\alpha}$ by $f$ being the identity off of $\cup_{j=0}^{k-1}\; \mathcal T_{\alpha}^{j}(\, D_0\,)$ and $f := \mathcal T_{\alpha}^{j+1} \circ \mathcal T_{1/2}^{-1}\circ \mathcal T_{\alpha}^{-j} $ on $\mathcal T_{\alpha}^{j}(\, D_0\,)\,$.     Since each of $\mathcal T_{\alpha}$ and $\mathcal T_{1/2}$ preserves the measure $\mu\,$,  one easily shows that this is an isomorphism.   
\end{proof}

%%%%%%%%%%%%%%%%%%%%%%%%%%%%%%%%%% 
\begin{Prop}  With notation as above,  let $\mu_{\alpha}$ denote the marginal measure obtained by integrating $\mu$ on the fibers of $\pi: \Omega_{\alpha} \to \mathbb I_{\alpha}\,$.  Further let $\mathcal B_{\alpha}$ denote the Borel $\sigma$-algebra of $ \mathbb I_{\alpha}\,$.  Then 
$(\mathcal T_{\alpha}, \Omega_{\alpha}, \overline{\mathcal B}_{\alpha}, \mu\,)$ is  the natural extension of  $(\,T_{\alpha}, \mathbb I_{\alpha}, \mathcal B_{\alpha}, \mu_{\alpha}\,)\,$.
 \end{Prop}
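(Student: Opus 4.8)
The plan is to verify directly the two properties that characterise a natural extension: that $\pi$ is a measure-preserving factor map carrying $\mathcal T_\alpha$ to $T_\alpha$, and that the forward $\mathcal T_\alpha$-iterates of $\pi^{-1}(\mathcal B_\alpha)$ generate the whole Borel structure $\overline{\mathcal B}_\alpha$ modulo $\mu$. That $\mathcal T_\alpha$ is measure preserving and bijective (mod $\mu$) on $\Omega_\alpha$ I take from the invariant density $dx\,dy/(1+xy)^2$ recorded just after \eqref{def: T(x,y)_alpha}, together with Proposition \ref{quiltGivesIso}, which realises $(\mathcal T_\alpha,\Omega_\alpha)$ as isomorphic mod $\mu$ to the bijective Rosen system $(\mathcal T_{1/2},\Omega_{1/2})$ of \cite{BKS}. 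It is worth stressing that this isomorphism need not commute with $\pi$, so the natural-extension property cannot merely be transported from the Rosen case and genuinely must be checked here.

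First I would record the factor structure. By \eqref{def: T(x,y)_alpha} the first coordinate of $\mathcal T_\alpha(x,y)$ equals $T_\alpha(x)$, so $\pi\circ\mathcal T_\alpha=T_\alpha\circ\pi$ identically on $\Omega_\alpha$. Since $\mu_\alpha=\pi_*\mu$ by definition and $\mu$ is $\mathcal T_\alpha$-invariant, $\mu_\alpha$ is $T_\alpha$-invariant; hence $\pi$ is a measure-preserving factor map from $(\mathcal T_\alpha,\Omega_\alpha,\overline{\mathcal B}_\alpha,\mu)$ onto $(T_\alpha,\mathbb I_\alpha,\mathcal B_\alpha,\mu_\alpha)$.

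The heart of the argument is the minimality condition $\bigvee_{n\ge0}\mathcal T_\alpha^{\,n}\,\pi^{-1}(\mathcal B_\alpha)=\overline{\mathcal B}_\alpha$ modulo $\mu$. A set of $\mathcal T_\alpha^{\,n}\pi^{-1}(\mathcal B_\alpha)$ is measurable with respect to the map $(x,y)\mapsto\pi(\mathcal T_\alpha^{-n}(x,y))$, namely the first coordinate $x_{-n}$ of the $n$-th backward iterate. Writing $(\varepsilon_{-n},d_{-n})$ for the sign and digit of $x_{-n}$, inversion of \eqref{def: T(x,y)_alpha} yields the telescoping relation $y=1/\big(d_{-1}\lambda+\varepsilon_{-1}/(d_{-2}\lambda+\varepsilon_{-2}/(\cdots))\big)$, so that the past digits, each a measurable function of one of the generating coordinates $x_{-n}$, reconstruct $y$; convergence of this $\lambda$-continued fraction is guaranteed by $\lambda=\lambda_q\ge1$ and $d_{-n}\ge1$, in the same way that the recursion \eqref{eq: Bn } controls the growth of the $B_n$. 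Together with $x=x_0=\pi(x,y)$, the family $\{x_{-n}\}_{n\ge0}$ therefore separates $\mu$-almost every pair of points of $\Omega_\alpha$, so the $\sigma$-algebra it generates is all of $\overline{\mathcal B}_\alpha$ mod $\mu$, which is exactly the defining property of the natural extension.

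The step I expect to be the main obstacle is making this reconstruction rigorous modulo $\mu$. One must confirm that $\mathcal T_\alpha^{-1}$ is well defined off a $\mu$-null set --- that is, that the fibres of $\Omega_\alpha$ over the cylinders $\Delta(\varepsilon:r)$ are arranged so that the pair $(\varepsilon,r)$ of the preceding point is a measurable function of the current $y$-coordinate --- and that the boundary set where a digit is ambiguous, together with the $\mathcal T_\alpha$-orbits of the finitely many pieces added and deleted by the quilting, carries no $\mu$-mass. Granting these facts, which follow from the explicit description of $\Omega_\alpha$ and the invertibility furnished by Proposition \ref{quiltGivesIso}, the continued-fraction convergence is routine and the proposition follows.
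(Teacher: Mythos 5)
Your overall plan is the right one, and it is in fact the one the paper itself has in mind: the paper offers no proof of this proposition, deferring to Schweiger's formalization \cite{Schw} of the ideas of \cite{NIT} and leaving the verification to the reader, and that verification amounts to exactly your two checks --- the factor-map identity $\pi\circ\mathcal T_\alpha=T_\alpha\circ\pi$ with $\mu_\alpha=\pi_*\mu$, plus the generation condition $\bigvee_{n\ge0}\mathcal T_\alpha^{\,n}\pi^{-1}(\mathcal B_\alpha)=\overline{\mathcal B}_\alpha$ modulo $\mu$. Your factor-map step is correct, your reduction of generation to the a.e.\ injectivity of $(x,y)\mapsto(x_{-n})_{n\ge0}$ is correct, and your warning that the isomorphism of Proposition~\ref{quiltGivesIso} cannot simply be transported through $\pi$ is a genuinely good observation.

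The gap is in the justification of the one step that carries all the analytic content: the convergence of the dual continued fraction $y=1/\bigl(d_{-1}\lambda+\varepsilon_{-1}/(d_{-2}\lambda+\cdots)\bigr)$. You claim this is ``guaranteed by $\lambda\ge 1$ and $d_{-n}\ge 1$, in the same way that the recursion \eqref{eq: Bn } controls the growth of the $B_n$.'' Both halves of that sentence are false. The $B_n$ do not grow: since $U^q=\mathrm{Id}$, one has $B_n=\sin(n\pi/q)/\sin(\pi/q)$, a bounded sequence, periodic up to sign. And entries $d\lambda$ with $d\ge1$, $\lambda\ge1$ together with signs $\pm1$ do not force convergence: the constant string $(\varepsilon:d)=(-1:1)$ produces the iteration of $w\mapsto 1/(\lambda-w)$, whose matrix has trace $\lambda=2\cos(\pi/q)<2$, so it is an elliptic M\"obius transformation with no real fixed point and its truncations rotate rather than converge. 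Convergence of the dual expansion holds only because the reversed digit strings that actually arise as pasts of points of $\Omega_\alpha$ are admissible (long blocks of $(-1:1)$ are forbidden --- indeed $(-1:1)^{p-1}$ already sends $l_0$ to $0$ in the even case), equivalently because the nested M\"obius images of the bounded fibers of $\Omega_\alpha$ contract. Establishing that contraction is precisely the nontrivial part of the verification carried out in \cite{BKS} and formalized by Schweiger; it is the one step your sketch takes for granted, and with an incorrect reason. Once that step is supplied (by admissibility of past digit strings, or by citing the corresponding statement for the fibred system in \cite{Schw}), the rest of your argument stands.
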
 
%%%%%%%%%%%%%%%%%%%%%%%%%%%%%%%%%% 

  This is perhaps best proven using   F. ~Schweiger's  formalization of the ideas of ~\cite{NIT}, see in particular Section 22 of Schweiger's textbook ~\cite{Schw}.    For similar applications,  see \cite{DKS} or \cite{KNS}.     We leave this verification to the reader. 

%%%%%%%%%%%%%%%%%%%%%%%%%%%%%%%%%% 
\begin{Cor}\label{equiEnt}  Under the hypotheses of Proposition ~\ref {quiltGivesIso},  the systems   
 $(\,T_{\alpha}, \mathbb I_{\alpha}, \mathcal B_{\alpha}, \mu_{\alpha}\,)\,$ have the same entropy.
 \end{Cor}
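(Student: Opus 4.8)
The plan is to deduce the corollary from two standard facts of ergodic theory, invoking the two propositions that immediately precede it; no computation with the explicit regions $\Omega_{\alpha}$ will be needed. The first fact is that the Kolmogorov--Sinai entropy is an invariant of measure-theoretic isomorphism; the second is that the natural extension of a measure-preserving system carries the same entropy as the system itself. Granting these, the equality of entropies collapses to a short chain of identities.

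First I would invoke the preceding Proposition, which identifies $(\mathcal{T}_{\alpha}, \Omega_{\alpha}, \overline{\mathcal B}_{\alpha}, \mu)$ as the natural extension of $(T_{\alpha}, \mathbb{I}_{\alpha}, \mathcal B_{\alpha}, \mu_{\alpha})$. Since entropy is preserved under passage to the natural extension, this gives
\[ h_{\mu_{\alpha}}(T_{\alpha}) = h_{\mu}(\mathcal{T}_{\alpha}), \]
and, applied at the parameter $1/2$,
\[ h_{\mu_{1/2}}(T_{1/2}) = h_{\mu}(\mathcal{T}_{1/2}). \]
Next, under the hypotheses of Proposition~\ref{quiltGivesIso}, the map $f$ constructed there is a measure-preserving bijection (off a $\mu$-null set) that conjugates $\mathcal{T}_{\alpha}$ to $\mathcal{T}_{1/2}$; hence the two planar systems are isomorphic, and by isomorphism invariance of entropy,
\[ h_{\mu}(\mathcal{T}_{\alpha}) = h_{\mu}(\mathcal{T}_{1/2}). \]
Concatenating the three displayed equalities yields $h_{\mu_{\alpha}}(T_{\alpha}) = h_{\mu_{1/2}}(T_{1/2})$. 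Because this holds for every admissible $\alpha$ with the common right-hand value $h_{\mu_{1/2}}(T_{1/2})$, all such one-dimensional systems share a single entropy, namely that of the standard Rosen map.

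The argument is essentially bookkeeping with known invariance principles, so there is no serious computational obstacle; the hard work was already done in establishing the isomorphism of Proposition~\ref{quiltGivesIso}. The one point meriting care is that the equalities above are genuine identities of \emph{measure-theoretic} entropy: this requires $f$ to be not merely a bijection but measure-preserving and dynamics-intertwining up to null sets, which is exactly what the construction $f := \mathcal{T}_{\alpha}^{j+1} \circ \mathcal{T}_{1/2}^{-1} \circ \mathcal{T}_{\alpha}^{-j}$ provides, since both $\mathcal{T}_{\alpha}$ and $\mathcal{T}_{1/2}$ preserve $\mu$. The entropy-preservation of the natural extension I would simply cite from a standard source (for instance Rohlin's theory, or the textbook treatment in Schweiger~\cite{Schw}); it rests on the base system being a measurable factor of its natural extension together with the inverse-limit description, which forces equality rather than mere inequality of the two entropies.
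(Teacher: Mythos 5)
Your proof is correct and follows the paper's own argument exactly: invoke the proposition identifying $(\mathcal T_{\alpha}, \Omega_{\alpha}, \overline{\mathcal B}_{\alpha}, \mu)$ as the natural extension, use Rohlin's theorem that a system and its natural extension have equal entropy, and use the isomorphism of Proposition~\ref{quiltGivesIso} together with isomorphism-invariance of entropy to chain the equalities. The paper's proof is just a terser statement of this same two-step argument, so there is nothing to add.
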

%%%%%%%%%%%%%%%%%%%%%%%%%%%%%%%%%%   
    
 \begin{proof}   It is known that  a system and its natural extension have the same entropy  \cite{Roh}.    Since the natural extensions here are all isomorphic, they certainly have the same entropy.
 \end{proof}

%%%%%%%%%%%%%%%%%%%%%%%%%%%%%
\begin{Rmk}
\begin{itemize}
\item[(1.)]  Rohlin \cite{Roh} introduced the notion of  natural extension explicitly in order to treat entropy.
\item[(2.)]  As we display in each of Sections ~\ref{classCaseSec}, \ref{SecEven} and \ref{SecOdd},  the key to  successful quilting is equality of orbits of $r_0$ and of $l_0\,$ after a fixed number of steps.    Compare this with the discussion  of \cite{NN} relating eventual equality of these orbits and behavior of the entropy  in the classical case.  
%\item[(3.)]  One can also apply our quilting techniques in the setting of $\alpha > 1/2\,$, thus recovering results of \cite{DKS}.  
\end{itemize}
  \end{Rmk}   
%%%%%%%%%%%%%%%%%%%  

%%%%%%%%%%%%%%%%%%%%%%%%%%%%%%%%%%     
\section{Classical case, $\lambda = 1\,$:   Nakada's $\alpha$-continued fractions}\label{classCaseSec}   Aiming to maximize expository  clarity,   as an example we re-establish the form of the natural extension for Nakada's $\alpha$-continued fraction \cite{N} (thus with $\lambda = 1$)    in the case of  $\sqrt{2}-1 \leq \alpha \leq 1/2\,$.   

It is easily verified that $\alpha = 1/2$ gives the classical nearest integer continued fractions (NICF),  whose natural extension was given by \cite{N},   $(\mathcal T_{1/2}, \Omega_{1/2}, \overline{\mathcal B}_{1/2}, \mu\,)\,$,  with 
\[ \Omega_{1/2} = \bigg(\, [\, -1/2, 0\,)    \times [\, 0, g^2 \,) \;\bigg) \bigcup  \bigg(\, [\, 0, 1/2\,)    \times [\, 0, g\,) \;\bigg)\;,\] 
where $g = \frac{-1+\sqrt{5}}{2}$ is the small golden number.       (Here, the non-full cylinders of $T_{1/2}$ are 
$\Delta(+1:2) = [\,\delta_2, 1/2\,)$  and $ \Delta(-1:2) = [\, -1/2, -\delta_2\,)\,$. )

Our goal is to re-establish the following result.     

 %%%%%%%%%%%%%%%%%%%%%%%%%%%%%%%%%% 
  \begin{Thm}\label{ThmClassCase}   For all $\alpha \in [\sqrt{2}-1, 1/2]$, the system 
$(\mathcal T_{\alpha}, \Omega_{\alpha}, \overline{\mathcal B}_{\alpha}, \mu\,)$ is isomorphic to 
$(\mathcal T_{1/2}, \Omega_{1/2}, \overline{\mathcal B}_{1/2}, \mu\,)\,$, where
 \[ 
\begin{aligned} 
 \Omega_{\alpha} := &  \; [\,\alpha-1, l_1\, ) \times [\,0, g^2\,)\;\;  \bigcup  \\
                                 & \; [\,l_1, r_1 \, ) \times \bigg(\,  [\,0, g^2\,) \cup [\, 1/2, g\,)\, \bigg)\;\;  \bigcup  \\
                                 &\;  [\, r_1, \alpha \, ) \times   [\,0,   g\,)\;,
\end{aligned} 
\]  
and  $\mathcal B_{\alpha}$  denotes the $\sigma$-algebra of $\mu$-Borel subsets of $\Omega_{\alpha}\,$.

Furthermore,   the entropy of $T_{\alpha}$ with respect to the marginal measure of the above system equals  $\dfrac{1}{\ln (1+g)}\, \dfrac{\pi^2}{6}\,$.
\end{Thm}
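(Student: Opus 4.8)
The plan is to instantiate the quilting set-up of Section~\ref{secQuiltGood} at $\lambda=1$ and to verify the hypothesis of Proposition~\ref{quiltGivesIso} for an explicit small $k$. First I would record the three basic regions. By Lemma~\ref{buildDeletion} the deleted region is $D_0=\{(x,y)\in\Omega_{1/2}\mid x>\alpha\}$, the strip over $[\alpha,1/2)$ with fibre $[0,g)$. The region of changed digits $C$ is, exactly as in the proof of that lemma, a union of thin strips at the right ends of the positive $T_{1/2}$-cylinders and the left ends of the negative ones over $[-1/2,\alpha)$, and on each such strip $d_\alpha=d_{1/2}+1$. Since every strip therefore carries $T_\alpha$-digit one larger than its $T_{1/2}$-digit, $T_\alpha$ sends all of them onto the single new interval $[\alpha-1,-1/2)$, so $A_0=\mathcal T_\alpha(C)$ is the strip over $[\alpha-1,-1/2)$; the interleaved fibres produced by the positive strips ($y\mapsto 1/((r+1)+y)$) and by the negative strips ($y\mapsto 1/((r+1)-y)$) tile exactly $[0,g^2)$, using the identities $g=1/(1+g)$ and $g^2=1/(1+g)^2$.

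I would then take one further image of each basic region. The hypothesis $\alpha\ge\sqrt{2}-1$ is equivalent to $r_1=1/\alpha-2<\alpha$, which forces the digit on all of $[\alpha,1/2)$ to be constantly $2$; hence $\mathcal T_\alpha(D_0)$ is the strip over $[0,r_1)$ whose fibre is the image of $[0,g)$ under $y\mapsto 1/(2+y)$, namely $[g^2,1/2)$ (again $1/(2+g)=g^2$). Symmetrically the digit on $[\alpha-1,-1/2)$ is constantly $2$ (here $l_1=1/(1-\alpha)-2>-1/2$, which follows from $\alpha>1/3$), so $\mathcal T_\alpha(A_0)$ is the strip over $[l_1,0)$ whose fibre is the image of $[0,g^2)$ under $y\mapsto 1/(2-y)$, namely $[1/2,g)$. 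The heart of the matter is the single equality
\[\mathcal T_\alpha^{2}(A_0)=\mathcal T_\alpha^{2}(D_0);\]
granting it, Proposition~\ref{quiltGivesIso} applies with $k=2$ and produces the desired isomorphism.

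The stated shape of $\Omega_\alpha$ then results from assembling the quilt $\Omega_{1/2}\cup A_0\cup\mathcal T_\alpha(A_0)$ with $D_0\cup\mathcal T_\alpha(D_0)$ removed (all higher images cancel by the displayed equality): over $[\alpha-1,l_1)$ only the left extension survives, with fibre $[0,g^2)$; over $[l_1,r_1)$ the added piece $[1/2,g)$ on $[l_1,0)$ and the deleted piece $[g^2,1/2)$ on $[0,r_1)$ both turn the fibre into $[0,g^2)\cup[1/2,g)$; and over $[r_1,\alpha)$ the original fibre $[0,g)$ is untouched. For the entropy, Corollary~\ref{equiEnt} shows that $T_\alpha$ and the nearest-integer map $T_{1/2}$ have equal entropy. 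By the formula recalled in the Introduction (the case $q=3$) this entropy is the normalizing constant times $\pi^2/6$; since $\int_0^Y \,dy/(1+xy)^2=Y/(1+xY)$ yields $\mu(\Omega_{1/2})=\ln\frac{2+g}{2-g^2}=\ln(1+g)$, the constant equals $1/\ln(1+g)$ and the value is $\frac{1}{\ln(1+g)}\,\frac{\pi^2}{6}$.

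The main obstacle is the displayed equality $\mathcal T_\alpha^{2}(A_0)=\mathcal T_\alpha^{2}(D_0)$. Both strips abut the point $0$, so under $\mathcal T_\alpha$ each breaks up over the infinitely many cylinders accumulating at the origin, and one must show that the two resulting countable families of image rectangles coincide as sets, and not merely in total measure. I expect this to reduce, cylinder by cylinder, to matching $x$-endpoints through the orbit relation linking $r_1$ and $l_1$ and to matching fibre-endpoints through the golden identities above. The value $\alpha=\sqrt{2}-1$ is precisely where $r_1=\alpha$; for smaller $\alpha$ the digit on $[\alpha,1/2)$ is no longer constant and this two-step closure fails, which is why the interval cannot be extended below $\sqrt{2}-1$.
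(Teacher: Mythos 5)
Everything you actually carry out is correct and follows the same route as the paper: your computations of $A_0$, $D_0$, of $A_1=\mathcal T_\alpha(A_0)=[\,l_1,0)\times[\,1/2,g)$ and $D_1=\mathcal T_\alpha(D_0)=[\,0,r_1)\times[\,g^2,1/2)$, the assembly of $\Omega_\alpha$ from the quilt, and the entropy evaluation via the normalizing constant all agree with the paper's argument. The genuine gap is the one step you explicitly defer: the equality $\mathcal T_\alpha^{2}(A_0)=\mathcal T_\alpha^{2}(D_0)$. You write that you ``expect'' the two countable families of image rectangles to coincide cylinder by cylinder, but this synchronization is the actual mathematical content of the theorem, not a verification that can be postponed: if the orbits of $A_0$ and $D_0$ matched only after \emph{different} numbers of steps, the systems would not be isomorphic and the entropies would in fact differ --- this is precisely the mechanism of Lemma~\ref{quiltNotSameNotIso} in Section~\ref{secDKSlargeAlpha}, where staggered matching for odd $q$ and $\alpha>\omega_0$ forces unequal entropy. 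So ``I expect this to reduce to matching endpoints'' assumes exactly what must be proven.

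The missing argument (the paper's Lemmas~\ref{orbitsMatch} and~\ref{filling in holes}) is short, and you already have all the ingredients. From $l_1=\frac{2\alpha-1}{1-\alpha}$ and $r_1=\frac{1-2\alpha}{\alpha}$ one computes
\[
\left|\frac{1}{l_1}\right|-\left|\frac{1}{r_1}\right|=\frac{1-\alpha}{1-2\alpha}-\frac{\alpha}{1-2\alpha}=1,
\]
so $l_2-r_2=1+d(r_1)-d(l_1)$ is an integer; since $l_2$ and $r_2$ both lie in $\mathbb I_\alpha$, an interval of length one, this integer is zero. Hence $l_2=r_2$ and, writing $d:=d(l_1)$, also $d(r_1)=d-1$: the two orbit points sit in cylinders whose indices are offset by exactly one. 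This offset is what makes your anticipated cylinder-by-cylinder matching close up. Indeed, the partial cylinders containing $l_1$ and $r_1$ satisfy
\[
\mathcal T_\alpha\bigl(\,[\,l_1,-\delta_d)\times[\,1/2,g)\,\bigr)=\mathcal T_\alpha\bigl(\,(\,\delta_{d-1},r_1)\times[\,g^2,1/2)\,\bigr)=[\,l_2,\alpha)\times\bigl[\,\tfrac{2}{2d-1},\tfrac{1}{d-g}\bigr),
\]
using $d-1+g^2=d-g$, while the remaining layers decompose into full cylinders that match term by term, $\mathcal T_\alpha\bigl(\Delta(-1:m)\times[\,1/2,g)\bigr)=\mathcal T_\alpha\bigl(\Delta(+1:m-1)\times[\,g^2,1/2)\bigr)$ for $m\ge d+1$, again by $g^2=1-g$. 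With this supplied, your appeal to Proposition~\ref{quiltGivesIso} with $k=2$ and the rest of your write-up complete the proof exactly as in the paper.
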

%%%%%%%%%%%%%%%%%%%%%%%%%%%%%%%%%% 

\begin{Rmk}  The constancy of  the entropy in this setting was first established by Moussa, Cassa and Marmi \cite{MCM}, the value at $\alpha = 1/2$ having been determined by \cite{N}.     We also note that Nakada and Natsui \cite{NN} explicitly show the isomorphism of these natural extensions (see their Appendix).
\end{Rmk}

  \subsection {Explicit form of the basic addition region}  
Fix $(\mathcal S, \Omega) :=(\,  \mathcal T_{1/2},  \Omega_{1/2}\,)$ and consider also a fixed $\alpha \in [\sqrt{2}-1, 1/2\,)\,$.  
   Recall that our intention is to  solve  for the region of the natural extension of $T_{\alpha}$,    by finding the $\mathcal T := \mathcal T_{\alpha}$-orbits of basic added and deleted regions.  
%%%%%%%%%%%%%%%%%%%%%%%%%%%%%%%%%% 
 \begin{Rmk}    Other than $\mathcal S$ and $\Omega$, all notation refers to values dependent upon $\alpha$ unless explicit dependence upon $1/2$ is indicated.  
 \end{Rmk}
%%%%%%%%%%%%%%%%%%%%%%%%%%%%%%%%%%    

 Note that with $r$ fixed,  $\delta_r$ as defined in Equation ~\eqref{def: delta} is a decreasing function in $\alpha\,$.    Note also that for  $\alpha \in (\sqrt{2}-1, 1/2)$ one has $\delta_2(\alpha) < \alpha\,$.  
 
 %%%%Figure 2%%%%%%% 
\begin{figure}
\begin{center}
\scalebox{.9}{\includegraphics{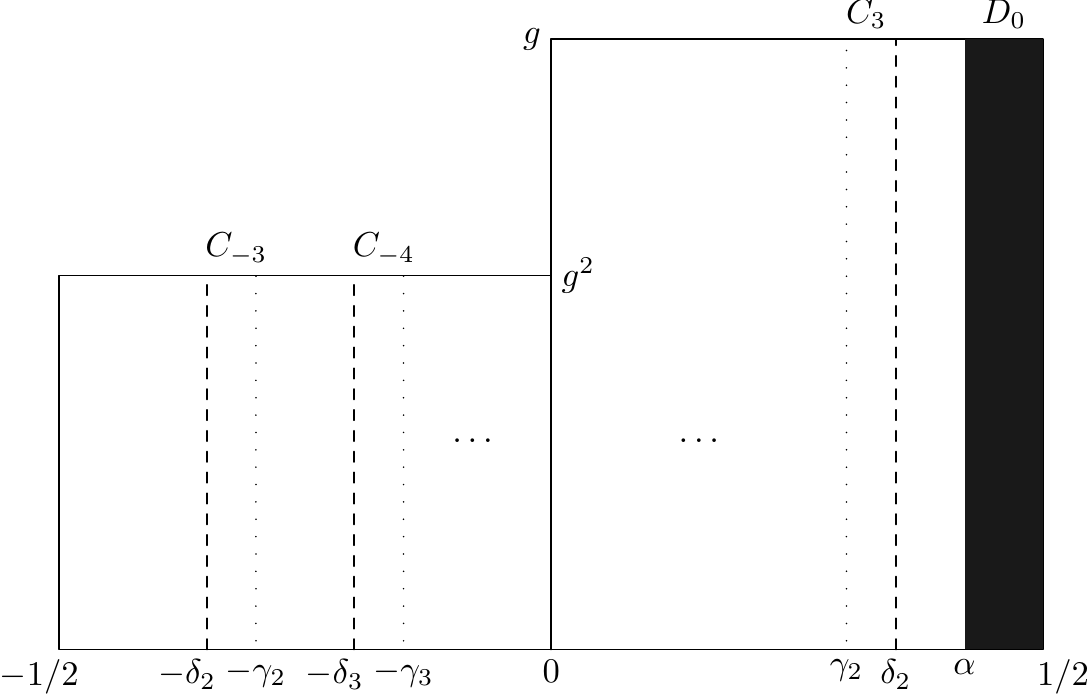}}
\caption{Regions of change of digit and basic deletion.  Here each $\gamma_{m}$ represents $\delta_m(1/2)\,$.   To aid visualization, we use here $C_{\varepsilon r} = C \cap \Delta( \varepsilon: r)\,$.}
\label{figChangeDigs}
\end{center}
\end{figure}
%%%%%%%%%%%%%%%%%%%
 %%%%%%%%%end Figure of Omega_{1/2} with change of digit regions %%%%%%%%%%%%%%%%%%%%%%%%%%%%%%%

%%%%%%%%%%%%%%%%%%%%%%%%%%%%%%%%%% 
\begin{Lem}\label{basicAddition}  The basic added region is  given by
 \[ A_0 =  [\, \alpha-1, -1/2) \times  [0, g^2 )\;.\]  
 \end{Lem}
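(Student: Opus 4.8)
The plan is to compute $\mathcal T_\alpha$ directly on the region of changed digits $C$ and read off that its image is the asserted rectangle. First I would make $C$ explicit. With $\lambda = 1$ the cylinder boundaries of $T_z$ sit at $\pm\delta_d(z) = \pm 1/(d+z)$, and since this is increasing as $z$ decreases, for $\alpha < 1/2$ the $T_\alpha$-boundaries lie strictly outside the $T_{1/2}$-boundaries. A short floor-function computation then shows that the digit increases from $d$ to $d+1$ exactly on the intervals $(\delta_d(1/2),\delta_d(\alpha)]$ for $x>0$ and $[-\delta_d(\alpha),-\delta_d(1/2))$ for $x<0$, and that the digits agree off these intervals (this is also consistent with Lemma~\ref{buildDeletion}, whose proof identifies the projection of $C$ as $T_{1/2}^{-1}((\alpha,1/2])$). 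The hypothesis $\alpha \geq \sqrt2-1$, which gives $\delta_2(\alpha) < \alpha$ as noted before the lemma, guarantees that the changed digits run over $d \geq 2$ and that each such interval lies inside $\mathbb I_\alpha \cap \mathbb I_{1/2} = [-1/2,\alpha)$ (for $d=1$ the candidate intervals fall outside $\mathbb I_{1/2}$). Intersecting with $\Omega_{1/2}$, the region $C$ is thus the union over $d \geq 2$ of the strips $(\delta_d(1/2),\delta_d(\alpha)] \times [0,g)$ and $[-\delta_d(\alpha),-\delta_d(1/2)) \times [0,g^2)$.

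Next I would apply $\mathcal T_\alpha$ one coordinate at a time. For the first coordinate, evaluating $T_\alpha(x) = |1/x| - (d+1)$ at the endpoints of each changed interval shows that $\delta_d(\alpha)$ and $-\delta_d(\alpha)$ both map to $\alpha-1$, while $\delta_d(1/2)$ and $-\delta_d(1/2)$ both map to $-1/2$; by the appropriate monotonicity on each side, every one of these strips is carried by $T_\alpha$ onto the single $x$-interval $[\alpha-1,-1/2)$. This already fixes the horizontal extent of $A_0$ and matches the claim, so the remaining work concentrates on the fibres.

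For the second coordinate, a point of a positive strip with new digit $m=d+1$ has $y$-fibre $[0,g)$ sent by $y \mapsto 1/(m+y)$ onto $(1/(m+g),\,1/m]$, while the matching negative strip has $y$-fibre $[0,g^2)$ sent by $y \mapsto 1/(m-y)$ onto $[1/m,\,1/(m-g^2))$; these meet at $1/m$, so their union is the open interval $(1/(m+g),\,1/(m-g^2))$. The golden-number identity $g^2 = 1-g$ gives $m-g^2 = (m-1)+g$, so writing $v_n := 1/(n+g)$ the per-digit image is exactly $(v_m, v_{m-1})$. Hence the images for $m=3,4,\dots$ are consecutive open intervals tiling $(0,v_2)$, and since $v_2 = 1/(2+g) = g^2$ their union equals $[0,g^2)$ up to the countable set $\{v_m : m \geq 3\}$ of shared endpoints. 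Combining with the first-coordinate computation yields $\mathcal T_\alpha(C) = [\alpha-1,-1/2) \times [0,g^2)$ up to a set of measure zero, which is the assertion.

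The main obstacle is this fibre bookkeeping: one must verify that the positive and negative image arcs for each digit interlock at $1/m$ with no overlap, and that consecutive per-digit intervals abut without gaps across \emph{all} $m \geq 3$ and accumulate correctly at $0$ and at $g^2$. This is precisely what the identities $g^2 = 1-g$ and $1/(3-g^2) = g^2$ control; the isolated shared endpoints that are dropped are negligible for the natural-extension domain, which is only determined up to measure zero. Everything else reduces to routine monotonicity-and-endpoint checks.
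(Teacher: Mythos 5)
Correct, and essentially the paper's own argument: like the paper, you decompose $C$ into strips at the ends of the cylinders, observe that $T_\alpha$ carries the base of each strip onto $[\,\alpha-1,-1/2)$, and stack the per-digit fibre images using $g^2=1-g$, your closing identity $1/(2+g)=g^2$ being the same as the paper's $1/(3-g^2)=g^2$. The only difference is that you are more explicit about the countable set of endpoints missed by the image, which is harmless since, as you note, the domain of the natural extension is only determined up to $\mu$-measure zero.
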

%%%%%%%%%%%%%%%%%%%%%%%%%%%%%%%%%% 

\begin{proof}    Similarly to the proof of Lemma ~\ref{buildDeletion},  we note that for $x<0$ each connected component of the marginal projection of $C$ lies at the left end of  a $T_{\alpha}$-cylinder and is sent by (the locally increasing function) $T_{\alpha}$ to $ [\, \alpha-1, -1/2)$.    When $x>0\,$,  the component lies at the right end of its cylinder and is also sent by $T_{\alpha}$ to $ [\, \alpha-1, -1/2)$.   One trivially checks  that no other points of    $\mathbb I_{\alpha}$ are sent to this subinterval.  
 
We now discuss the $y$-coordinates of points in $\mathcal T(A_0\,)\,$.     The fibers of $\Omega_{1/2}$ above  cylinders where $x<0$ are of the form $[0, g^2)\,$, while above cylinders with $x>0$ the fibers are of the form $[0,g)\,$.    Now,  for $(x,y)  \in \mathcal D(\varepsilon: r)\,$, 
the $y$-coordinate of  $\mathcal T (x,y)$ is $1/(r + \varepsilon y)\,$.    Thus,      $\mathcal T$ sends   $\mathcal D(-1: r)\,$ and $\mathcal D(+1: r)$ to horizontal strips whose $y$-values lie in $[\, \frac{1}{r  + g},\, \frac{1}{r  -g^2}\,]\,$.   But,   
 $g^2 = 1-g\,$ and thus this image lies directly above that of the horizontal strip given by $\mathcal T$ applied to  $\mathcal D(-1: r+1)\,$ and $\mathcal D(+1: r+1)\,$.     
 
 Now, the greatest $y$-value of $\mathcal T(C)$  comes from the intersection of the projection of $C$ with  $\mathcal D(-1:3)\,$.     Thus,   $A_0 =  [\, \alpha-1, -1/2) \times  [0, 1/(3-g^2)\, )\,$.   
 Since $1/(3-g^2) = g^2$, the result follows. 
 \end{proof}

\subsection{Quilting}
  We now show that the $\mathcal T$-orbits of the added regions eventually match the orbits of the deleted regions, and thus $\mathcal T$ is bijective (modulo $\mu$-measure zero) on $\Omega_{\alpha}\,$. 

%%%%%%%%%%%%%%%%%%%%%%%%%%%%%%%%%% 
\begin{Lem}\label{orbitsMatch}  The following equality holds:
 \[ l_2 = r_2 \;.\]  
 Furthermore,  there is a $d \in \mathbb N$ such that 
 \[ l_1 \in \Delta(-1:d)\;\; \text{and}\;\; r_1 \in \Delta(1:d-1)\;.\]
 \end{Lem}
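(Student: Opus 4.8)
The plan is to compute the first two entries of each endpoint orbit by hand, exploiting that here $\lambda = 1$, so that $\mathbb{I}_{\alpha} = [\,\alpha-1,\alpha\,)$ has length one and $l_0 = \alpha-1$, $r_0 = \alpha$ satisfy $r_0 + |l_0| = 1$. First I would pin down the zeroth digits. Since $z \mapsto 1/z + 1 - z$ is strictly decreasing and $z \mapsto 1/(1-z) + 1 - z$ strictly increasing on $(0,1/2)$, for $\alpha \in (\sqrt{2}-1,1/2)$ one checks that both $1/\alpha + 1 - \alpha$ and $1/(1-\alpha) + 1 - \alpha$ lie strictly between $2$ and $3$; hence $d(r_0) = d(l_0) = 2$. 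As $\varepsilon(r_0) = +1$ and $\varepsilon(l_0) = -1$, this gives
\[ r_1 = \frac{1}{\alpha} - 2 = \frac{1-2\alpha}{\alpha} > 0 \qquad\text{and}\qquad l_1 = \frac{1}{1-\alpha} - 2 = \frac{2\alpha-1}{1-\alpha} < 0, \]
the signs being immediate from $\alpha < 1/2 < 1-\alpha$.

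The heart of the argument is then a single identity. Taking reciprocals of absolute values, $1/|l_1| = (1-\alpha)/(1-2\alpha)$ and $1/r_1 = \alpha/(1-2\alpha)$, so that
\[ \frac{1}{|l_1|} - \frac{1}{r_1} = \frac{(1-\alpha) - \alpha}{1 - 2\alpha} = 1 = \lambda. \]
This identity is the pivot of the whole argument: the length-one interval and the coincidence $d(r_0) = d(l_0)$ are exactly what make the two denominators agree (both equal $1-2\alpha$ precisely because $r_0 + |l_0| = \lambda$), while the numerators $|l_0|$ and $r_0$ differ by exactly that same amount $1-2\alpha$.

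From the identity everything downstream is formal. Because adding $\lambda = 1$ inside the floor simply increments it,
\[ d(l_1) = \left\lfloor \frac{1}{|l_1|} + 1 - \alpha \right\rfloor = \left\lfloor \frac{1}{r_1} + 1 - \alpha \right\rfloor + 1 = d(r_1) + 1. \]
Setting $d := d(l_1)$ and recalling $\varepsilon(l_1) = -1$, $\varepsilon(r_1) = +1$, this yields $l_1 \in \Delta(-1:d)$ and $r_1 \in \Delta(+1:d-1)$, which is the ``furthermore'' assertion; one notes $d(r_1) \geq 1$ so that $d - 1 \in \mathbb{N}$. Finally,
\[ l_2 = \frac{1}{|l_1|} - d(l_1) = \left( \frac{1}{r_1} + 1 \right) - \bigl( d(r_1) + 1 \bigr) = \frac{1}{r_1} - d(r_1) = r_2, \]
and notably the explicit value of $d(r_1)$ never has to be computed.

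I would expect the only real obstacle to be the bookkeeping at the closed endpoint $\alpha = \sqrt{2}-1$, where the floors above become exactly $3$ and the points $r_0, l_0$ sit on cylinder boundaries rather than strictly inside cylinders; this borderline case I would dispatch by the usual one-sided convention for endpoint orbits (or by a direct continuity check). On the open interval the computation is entirely routine once the zeroth digits and the signs of $r_1, l_1$ are fixed, since the identity $1/|l_1| - 1/r_1 = \lambda$ then does all of the remaining work.
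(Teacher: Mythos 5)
Your proof is correct and follows essentially the same route as the paper: both arguments establish that $l_0$ and $r_0$ have digit $2$, compute $l_1$ and $r_1$ explicitly, and exploit the identity $1/|l_1| - 1/r_1 = 1 = \lambda$. The only difference is the final inference --- the paper leaves the digits $d, d'$ of $l_1, r_1$ undetermined and observes that $l_2 - r_2 = 1 + d' - d$ is an integer equal to the difference of two points of the length-one interval $\mathbb{I}_\alpha$, hence zero, whereas you extract $d(l_1) = d(r_1) + 1$ directly by floor arithmetic; this is a harmless variation, and your explicit check of the zeroth digits and attention to the borderline case $\alpha = \sqrt{2}-1$ are points the paper leaves implicit.
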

%%%%%%%%%%%%%%%%%%%%%%%%%%%%%%%%%% 

\begin{proof}    We have that $l_0 \in \Delta(-1:2)$ and $r_0 \in  \Delta(+1:2)\,$, giving 
\[ l_1 = \dfrac{2 \alpha - 1}{1-\alpha}\;\; \text{and}\;\; r_1 =  \dfrac{1- 2 \alpha}{\alpha}\;.\]
Therefore, 
\[ l_2 = \dfrac{1-\alpha}{1- 2 \alpha} - d\;\; \text{and}\;\; r_2 =  \dfrac{\alpha}{1- 2 \alpha}- d'\;,\]
with $d, d'$ the appropriate $T_{\alpha}$-digits.  Now, $l_2 - r_2 = 1 + d' - d$ and is the difference of two elements in $\mathbb I_{\alpha}\,$, an interval of length one.   We thus conclude both that $d' = d-1$ and $l_2 = r_2\,$.
 \end{proof}

The orbit of the basic addition region, $A_0\,$,  is quickly synchronized with that of the basic deletion region, $D_0\,$.    Recall that $\mathcal D(\varepsilon:r)$ fibers over $\Delta(\varepsilon:r)\,$.
%%%%%%%%%%%%%%%%%%%%%%%%%%%%%%%%%% 
\begin{Lem}\label{filling in holes}  We have 
 \[ \mathcal T^2(\,A_0\,)  =  \mathcal T^2(\,D_0\,) \;.\]  
 \end{Lem}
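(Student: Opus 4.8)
The plan is to compute the two-step orbits of $A_0$ and $D_0$ explicitly and check that they coincide, the synchronization being forced by the golden-number identity $g^2 = 1-g$ (equivalently $1/(1+g)=g$ and $1/(2+g)=g^2$) together with the equality $l_2 = r_2$ from Lemma~\ref{orbitsMatch}. First I would record the two basic regions: Lemma~\ref{basicAddition} gives $A_0 = [\,\alpha-1, -1/2) \times [\,0, g^2)$, while intersecting the region of Lemma~\ref{buildDeletion} with the explicit $\Omega_{1/2}$ yields $D_0 = (\,\alpha, 1/2) \times [\,0, g)$. The key preliminary observation is that each sits over a single cylinder: since $\alpha \geq \sqrt{2}-1$ forces $\alpha \geq \delta_2$ and $1/2 \leq \delta_1$, the projection of $A_0$ lies in $\Delta(-1:2)$ and that of $D_0$ lies in $\Delta(+1:2)$. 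On a single cylinder $\mathcal T$ acts as an explicit M\"obius factor in each coordinate, so the first images are immediate products.

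Applying $\mathcal T$ once, using $T_\alpha(x) = -1/x - 2$ on $\Delta(-1:2)$ and the fiber map $y \mapsto 1/(2-y)$ together with $1/(2-g^2) = 1/(1+g) = g$, I would obtain $\mathcal T(A_0) = [\,l_1, 0) \times [\,1/2, g)$; symmetrically, using $T_\alpha(x) = 1/x - 2$ and $y \mapsto 1/(2+y)$ with $1/(2+g) = g^2$, I would obtain $\mathcal T(D_0) = (\,0, r_1) \times (\,g^2, 1/2]$. These two regions are disjoint, one lying over negative $x$ and the other over positive $x$, so nothing has matched yet; the matching appears only at the next step.

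For the second application I would decompose each $x$-interval into cylinders, invoking Lemma~\ref{orbitsMatch} to write $l_1 \in \Delta(-1:d)$ and $r_1 \in \Delta(+1:d-1)$. The partial cylinder at the far end of each region maps, via the same pair of M\"obius factors and the identity $d-1+g^2 = d-g$, to the common rectangle $[\,l_2, \alpha) \times [\,1/(d-\tfrac12),\, 1/(d-g))$; here $l_2 = r_2$ is exactly what makes the two $x$-images agree. The remaining full cylinders $\Delta(-1:r)$ with $r \geq d+1$ of $\mathcal T(A_0)$ and $\Delta(+1:s)$ with $s \geq d$ of $\mathcal T(D_0)$ each map onto all of $\mathbb I_\alpha$, so it remains only to match their horizontal strips; the identity $s + g^2 = (s+1) - g$ shows that the strip coming from $\Delta(+1:s)$ of $\mathcal T(D_0)$ coincides with the strip coming from $\Delta(-1:s+1)$ of $\mathcal T(A_0)$, giving a bijection between the two families and hence $\mathcal T^2(A_0) = \mathcal T^2(D_0)$ up to $\mu$-measure zero.

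I expect the bookkeeping of the infinitely many full-cylinder strips in the last step to be the main obstacle: one must verify that the index shift $r = s+1$ produces a perfect tiling with neither gaps nor overlaps, which hinges on the fibers of $\Omega_{1/2}$ having heights exactly $g$ and $g^2$ so that the golden-number relations above hold. Everything else reduces to finite, explicit M\"obius computations.
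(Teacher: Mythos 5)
Your proposal is correct and follows essentially the same route as the paper: compute $\mathcal T(A_0) = [\,l_1, 0) \times [\,1/2, g)$ and $\mathcal T(D_0) = [\,0, r_1) \times [\,g^2, 1/2)$, split off the partial cylinders containing $l_1 \in \Delta(-1:d)$ and $r_1 \in \Delta(+1:d-1)$ whose images agree via Lemma~\ref{orbitsMatch} and $g^2 = 1-g$, and match the remaining full-cylinder strips under the index shift $m \mapsto m-1$. Your version is in fact slightly more careful than the paper's (e.g.\ the upper fiber bound $1/(d-g)$ and the fiber heights of the full-cylinder strips), so no gaps remain.
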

%%%%%%%%%%%%%%%%%%%%%%%%%%%%%%%%%% 

\begin{proof}   Let $A_1 :=  \mathcal T(\,A_0\,)\,$; since $ A_0 \subset   \mathcal D(-1:2)$, an elementary calculation shows that this is 
\[A_1 = [\, l_1\,, 0) \times [\,1/2, g\,)\;.\]
Similarly,   defining $D_1 :=  \mathcal T(\,D_0\,)\,$,  one has $D_1 \subset  \mathcal D(1:2)$,
 and finds
\[D_1 = [\, 0, r_1\, ) \times [\,g^2, 1/2\,)\;.\]
 
With $d$ as in Lemma ~\ref{orbitsMatch},  let 
\[A'_1 := A_1 \cap \mathcal D(-1:d)\;\; \text{ and} \;\; D'_1 := D_1 \cap \mathcal D(+1:d-1)\;.\]
By elementary calculation, Lemma ~\ref{orbitsMatch}, and an application of the identity $g^2 = 1-g\,$,  one finds that 
\[ 
\begin{aligned}
\mathcal T(A'_1) &= \, \mathcal T(D'_1) \\
                              &=  \left[\, l_2,  \alpha \right) \times \left[\, \frac{2}{2d - 1}, d-g\,\right)\;.
\end{aligned}
\]
Each of  $A_1  \setminus A'_1$ and $D_1  \setminus D'_1$  projects to the union of full cylinders:
\[ A_1  \setminus A'_1 = \bigcup_{m= d+1}^{\infty}\, \Delta(-1: -m) \times [\, 1/2, g\,)\;\;\text{and} \;\; D_1  \setminus D'_1 = \bigcup_{m= d}^{\infty}\, \Delta(+1: m) \times [\, 1/2, g^2\,)\;.\]
Since $\mathcal T\big(\,  \Delta(-1: -m) \times [\, 1/2, g\,)\,\big) = \mathcal T\big(\, \Delta(+1: m-1) \times [\, 1/2, g^2\,)\,\big)\,,$ we conclude that  $\mathcal T(\,A_1\,)  =  \mathcal T(\,D_1\,) \,$, and the result follows.
 \end{proof}

\subsection{Isomorphic systems} 
In this subsection, we complete the proof of Theorem ~\ref{ThmClassCase}.
%%%%Figure 3%%%%%%% 
\begin{figure}
\begin{center}
\scalebox{0.8}{\includegraphics{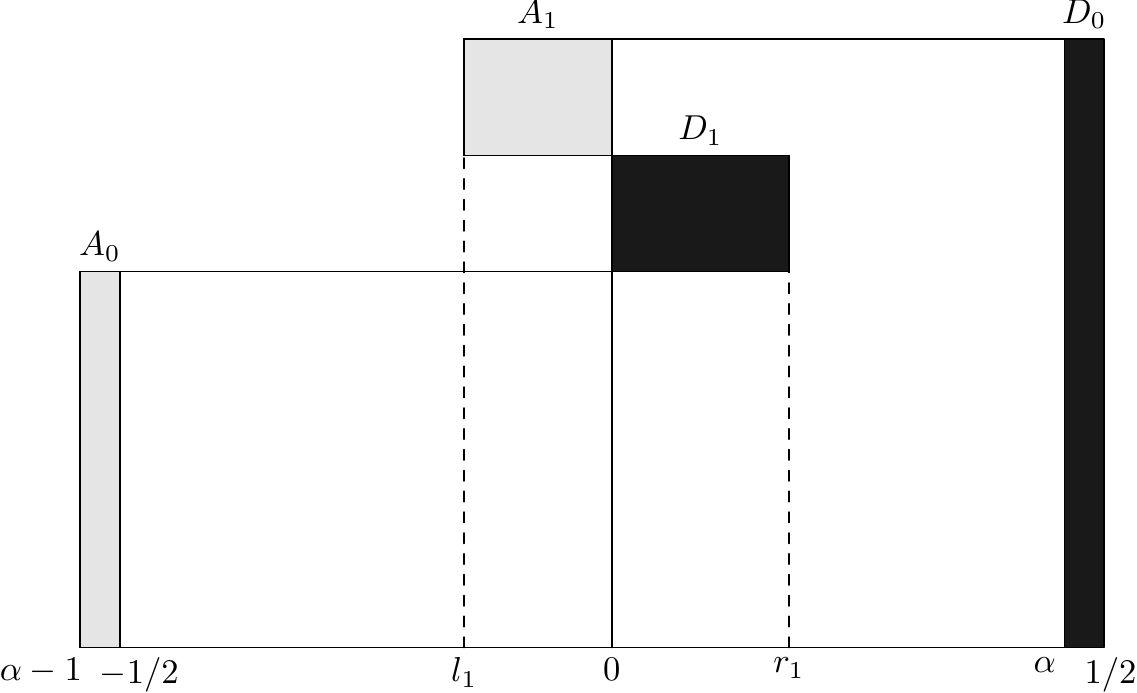}}
\caption{ Representative region $\Omega_{\alpha}$ for $\alpha \in (\, \sqrt{2}-1,\, 1/2\,)\,$; $q=3\,$.}
\label{figOmegaAlpha}
\end{center}
\end{figure}
%%%%%%%%%%%%%%%%%%%
 %%%%%%%%%end Figure of Omega_{alpha} %%%%%%%%%%%%%%%%%%%%%%%%%%%%%%%

%%%%%%%%%%%%%%%%%%%%%%%%%%%%%%%%%% 
\begin{Cor}\label{exactRegion}  We have 
 \[ 
\begin{aligned} 
 \Omega_{\alpha} = &  \; [\,\alpha-1, l_1\, ) \times [\,0, g^2\,)\;\;  \bigcup  \\
                                 & \; [\,l_1, r_1 \, ) \times \bigg(\,  [\,0, g^2\,) \cup [\, 1/2, g\,)\, \bigg)\;\;  \bigcup  \\
                                 &\;  [\, r_1, \alpha \, ) \times   [\,0,   g\,)\;.
\end{aligned} 
\]  
  Furthermore,   the $\mu$-area of $\Omega_{\alpha}$ equals that of $\Omega_{1/2}\,$.
 \end{Cor}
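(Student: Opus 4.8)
The plan is to collapse the set-theoretic definition of $\Omega_{\alpha}$ to a finite modification of $\Omega_{1/2}$, read off the resulting fibers, and then settle the area by invariance of $\mu$. The key input is the synchronization already in hand: Lemma~\ref{filling in holes} gives $\mathcal T^2_{\alpha}(A_0) = \mathcal T^2_{\alpha}(D_0)$, so applying $\mathcal T_{\alpha}$ repeatedly yields $\mathcal T^k_{\alpha}(A_0) = \mathcal T^k_{\alpha}(D_0)$ for every $k \geq 2$. Writing $\mathcal A := \bigcup_{k\geq 0}\mathcal T^k_{\alpha}(A_0)$ for the added orbit (note $\bigcup_{k\geq 1}\mathcal T^k_{\alpha}(C) = \mathcal A$) and $\mathcal D := \bigcup_{k\geq 0}\mathcal T^k_{\alpha}(D_0)$ for the deleted orbit, the two share the common tail $\mathcal P := \bigcup_{k\geq 2}\mathcal T^k_{\alpha}(A_0)$ and differ only in their first two terms, so the definition reads $\Omega_{\alpha} = (\Omega_{1/2}\cup\mathcal A)\setminus(\mathcal D\setminus\mathcal A)$.

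The one point needing real care — and the step I expect to be the main obstacle — is verifying that the tail $\mathcal P$ introduces nothing outside the target region, so that $D_0,D_1$ are the only genuine deletions. I would handle this directly, not circularly: setting $\Omega' := (\Omega_{1/2}\cup A_0\cup A_1)\setminus(D_0\cup D_1)$, I would check (the strip computations in the proof of Lemma~\ref{filling in holes} already supply most of this) that $\mathcal T_{\alpha}$ maps $\Omega'$ into itself. Since $A_0,A_1\subseteq\Omega'$, induction then forces every $\mathcal T^k_{\alpha}(A_0)\subseteq\Omega'$, hence $\mathcal P\subseteq\Omega'$; in particular $\mathcal P$ is disjoint from the excluded $D_0\cup D_1$, and since $D_0,D_1$ have positive first coordinate whereas $A_0,A_1$ have negative first coordinate, the deleted orbit meets the added orbit in exactly $\mathcal P$. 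Thus $\mathcal D\setminus\mathcal A = D_0\cup D_1$ and $\Omega_{\alpha} = (\Omega_{1/2}\cup\mathcal A)\setminus(D_0\cup D_1) = \Omega'$.

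It then remains to identify $\Omega'$ with the displayed three-piece region and to compute its area. I would assemble the four blocks: $A_0 = [\,\alpha-1,-1/2)\times[0,g^2)$ from Lemma~\ref{basicAddition}, $A_1 = [\,l_1,0)\times[1/2,g)$ and $D_1 = [\,0,r_1)\times[g^2,1/2)$ from Lemma~\ref{filling in holes}, and $D_0 = \{(x,y)\in\Omega_{1/2}\mid x>\alpha\}$ from Lemma~\ref{buildDeletion} (with $\lambda=1$). Using $l_1 = (2\alpha-1)/(1-\alpha)$ and $r_1 = (1-2\alpha)/\alpha$ from Lemma~\ref{orbitsMatch} with $\alpha\in(\sqrt2-1,1/2)$, I would record the ordering $\alpha-1 < -1/2 < l_1 < 0 < r_1 < \alpha$ (here $r_1<\alpha$ is exactly $\alpha^2+2\alpha-1>0$, i.e. $\alpha>\sqrt2-1$, and $l_1>-1/2$ is $\alpha>1/3$). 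A fiber-by-fiber reading over the three $x$-intervals then yields the claim: over $[\,\alpha-1,l_1)$ the union $A_0\cup\Omega_{1/2}$ contributes exactly $[0,g^2)$; over $[\,l_1,0)$ the fiber $[0,g^2)$ of $\Omega_{1/2}$ is enlarged by $A_1$ to $[0,g^2)\cup[1/2,g)$, while over $[\,0,r_1)$ deleting $D_1$ cuts the fiber $[0,g)$ down to the same set $[0,g^2)\cup[1/2,g)$ (using $g^2=1-g$, so the two middle descriptions agree); and over $[\,r_1,\alpha)$ the full fiber $[0,g)$ survives. For the area I would avoid integrating $d\mu$ and argue by invariance: both $\mathcal T_{\alpha}$ and $\mathcal T_{1/2}$ preserve $d\mu = dx\,dy/(1+xy)^2$ and are injective (mod $\mu$-measure zero) on the relevant pieces, so $\mu(A_0)=\mu(\mathcal T_{\alpha}C)=\mu(C)=\mu(\mathcal T_{1/2}C)=\mu(D_0)$, and likewise $\mu(A_1)=\mu(A_0)$ and $\mu(D_1)=\mu(D_0)$. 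As $A_0,A_1$ are disjoint from one another and from $\Omega_{1/2}$, while $D_0,D_1$ are disjoint from one another and contained in $\Omega_{1/2}$, inclusion-exclusion gives $\mu(\Omega_{\alpha}) = \mu(\Omega_{1/2})-\mu(D_0)-\mu(D_1)+\mu(A_0)+\mu(A_1)=\mu(\Omega_{1/2})$, the fiber bookkeeping and this area count both being routine once the ordering and the identity $g^2=1-g$ are in place.
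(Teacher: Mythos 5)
Your proposal is correct and follows essentially the same route as the paper: Lemma~\ref{filling in holes} collapses the infinite unions in the definition of $\Omega_{\alpha}$ to $\bigl(\,\Omega_{1/2}\cup A_0\cup A_1\,\bigr)\setminus\bigl(\,D_0\cup D_1\,\bigr)$, the displayed region is then read off fiber by fiber, and the equality of areas follows from the $\mu$-preservation of $\mathcal T_{\alpha}$ and $\mathcal T_{1/2}$ exactly as in the paper's proof. The only place you go beyond the paper's (very terse) argument is the forward-invariance step showing the common tail $\bigcup_{k\ge 2}\mathcal T_{\alpha}^{k}(A_0)$ already lies inside the target region --- a containment the paper leaves implicit but which is genuinely needed to conclude $\Omega_{\alpha}=\Omega'$; that check is a routine finite verification and your plan for it is sound.
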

%%%%%%%%%%%%%%%%%%%%%%%%%%%%%%%%%% 
\begin{proof}   From the above,   
\[\Omega_{\alpha} =  \bigg(\,  \Omega \cup A_0 \cup A_1\, \bigg) \setminus \bigg(\, D_0 \cup D_1\,\bigg)\;.\]
The explicit shape of $\Omega_{\alpha}$ follows.   

By definition of $A_0$  and by Lemma ~\ref{buildDeletion},  $A_0$ and $D_0$ are the images of the union of the change of digit regions under $\mathcal T$ and $\mathcal S$, respectively.   Since both $\mathcal S$ and $\mathcal T$ are $\mu$-measure preserving, thus $\mu(D_0) = \mu(A_0)\,$.   Again since $\mathcal T$ preserves measure, we also have  $\mu(D_1) = \mu(A_1)\,$. 
 \end{proof}

 Of course, the equality of the areas is already implied by the arguments of Section ~\ref{secQuiltGood}.  Indeed, since the entropy of the NICF is known, those arguments can be easily adapted to complete the proof of the Theorem ~\ref{ThmClassCase}.

\section{Even $q\,$;  $\alpha \in (\, \alpha_0, 1/2 \,]\,$}\label{SecEven}  
The natural extension for the Rosen fractions was determined in \cite{BKS}.      The exact form of the domain depends on the parity of the index $q$.   
\subsection{Natural extensions for Rosen fractions}  Let $q=2p$ for $p \in \N$ and $p\geq 2\,$.     The domain, see Figure ~\ref{im: natural extension 1/2},  given by \cite{BKS} is 
 \[ \Omega = \bigcup_{j=1}^p J_j \times K_j\,, \]
here $J_j$ is defined as follows: Let $\varphi_j=T_{1/2}^j\left(
-\frac{\lambda}{2} \right)$, then $J_j=[\varphi_{j-1},\varphi_{j})$ for
$j \in \{1,2,\ldots,p-1 \}$ and $J_p = \left[ 0,\frac{\lambda}{2}
\right)$. Further, $K_j=[0,L_j]$ for $j \in \{1,2,\ldots,p-1 \}$
and $K_p = [0,1]\,$, where the values of the $L_j$   are given by  the
following relations:
\[
\begin{aligned}
L_1\, &=\, 1/(\lambda + 1),\\
L_j\, &=\, 1/(\lambda -L_{j-1})\qquad \text{for }\,j\in \{ 2,\dots ,p-1 \}.
\end{aligned}
\]

The normalizing constant $C$ such that $C d\mu$ gives a probability measure on $\Omega_{\alpha}$ is 
\begin{equation}\label{evenNormConst}
C = \dfrac{1}{\ln[\,(1 + \cos \pi/q\,)/\sin \pi/q\,]}\;,
\end{equation}
see Lemma 3.2 of \cite{BKS}.

%%%%%%%%%%%%%%%%%%%%%%%%%%%%%%%%%%%%%%%%%%%%%%%%%%%%%%%%%%%%%%%%%Figure 4 %%%%%%%%%%%%%%%%%%%%%%%%%%
\begin{figure}[!ht]
\includegraphics[height=40mm]{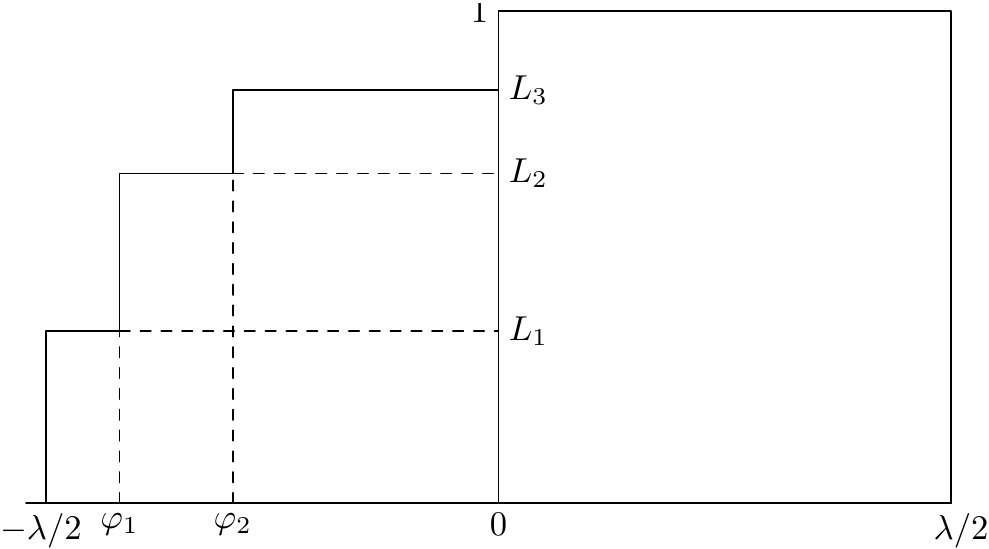}
\caption{The natural extension for $q=8$ and $\alpha = \frac12$.}
\label{im: natural extension 1/2}
\end{figure}
%%%%%%%%%%%%%%%%%%%%%%%%%%%%%%%%%%%%%%%%%%%%%%%%%%%%%%%%%%%%%%%%%%%%%%%%%%%%%%%%%%%%%%%%%%%

In this case, using that $U$ is of order $q$ (in the projective group) for the $B_j$ as defined in \eqref{eq: Bn },  we have that
\begin{equation}\label{bRelation}
B_{p-1} = B_{p+1} = \frac{\lambda}{2} B_p \; \textrm{ and }\;  B_{p-2} = \left( \frac{\lambda^2}{2}-1 \right) B_p\,.
\end{equation}

The key to understanding the system for $\mathcal T_{\alpha}$ is that  for all $q= 2p\,$, one has that $l_p = r_p\,$ for  all  of our $\alpha\,$.    In proving this,  it is convenient to use the fact that the orbits of $r_0$ and $-r_0$ coincide after one application of $T_{\alpha}\,$.

%%%%%%%%%%%%%%%%%%%%%%%%%%%%%%%%%%%%%%%%
\begin{Lem}
\label{lem:rZeroDigsEven}
For any $\alpha > \alpha_0$, the $T_{\alpha}$-expansion of both $l_0$ and $-r_0$ starts as \newline $[(-1:1)^{p-1}, \dots\,]\,$.
\end{Lem}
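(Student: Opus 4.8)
The plan is to reduce the claim to a single monotone orbit computation governed by the Hecke element $U$. First I would identify the relevant cylinder. For $x<0$ the condition $d_\alpha(x)=1$ is $\delta_1<|x|\le\delta_0$, and since $\delta_0=1/(\lambda\alpha)>\lambda(1-\alpha)=|l_0|$ (because $\lambda^2\alpha(1-\alpha)<1$ for $\lambda<2$), one finds $\Delta(-1:1)\cap\mathbb I_\alpha=[\,l_0,-\delta_1\,)$. On this cylinder $T_\alpha$ is the increasing branch $x\mapsto -\lambda-1/x$, which sends $-\delta_1$ to $\alpha\lambda=r_0$. Thus the lemma asserts exactly that the first $p-1$ iterates of each of $l_0$ and $-r_0$ stay in $[\,l_0,-\delta_1\,)$.

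The device I would use is that this branch is conjugate to $U$ by the reflection $\sigma(x)=-x$: since $U(x)=\lambda-1/x$, a one-line check gives $-\lambda-1/x=\sigma U\sigma(x)$. Hence, as long as the orbit remains in $\Delta(-1:1)$,
\[ l_j=-\,U^j\big((1-\alpha)\lambda\big)\qquad\text{and}\qquad T_\alpha^j(-r_0)=-\,U^j(\alpha\lambda). \]
I would also record that $d_\alpha(r_0)=1$, so that $T_\alpha(-r_0)=T_\alpha(r_0)=r_1$ and $T_\alpha^j(-r_0)=r_j$ for $j\ge1$; this ties the computation to the $r$-orbit and is the promised coincidence of the orbits of $r_0$ and $-r_0$ after one step.

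Next I would use monotonicity to collapse the $p-1$ membership conditions into one. Because $\lambda<2$, the polynomial $x^2+\lambda x+1$ has no real root, so on the whole branch $T_\alpha(x)-x=-(x^2+\lambda x+1)/x>0$ for $x<0$; hence the orbit strictly increases while it stays in $\Delta(-1:1)$, and the escape is controlled solely by the upper endpoint $-\delta_1$. Moreover $\alpha\le1/2$ gives $l_0\le -r_0$, and the branch is order-preserving, so $l_j\le T_\alpha^j(-r_0)$ throughout; thus $-r_0$ is the binding point. The whole statement therefore reduces to the single inequality
\[ T_\alpha^{p-2}(-r_0)=-\,U^{p-2}(\alpha\lambda)<-\delta_1. \]

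Finally I would evaluate $U^{p-2}$ explicitly. Writing $U^{p-2}=\left[\begin{smallmatrix}B_{p-1}&-B_{p-2}\\ B_{p-2}&-B_{p-3}\end{smallmatrix}\right]$ and substituting \eqref{bRelation} together with $B_{p-3}=\lambda B_{p-2}-B_{p-1}=\tfrac\lambda2(\lambda^2-3)B_p$, the common factor $B_p$ cancels and
\[ U^{p-2}(\alpha\lambda)=\frac{\lambda^2(\alpha-1)+2}{\lambda\big[(\lambda^2-2)\alpha-\lambda^2+3\big]}. \]
Since the $\alpha$-coefficient $\lambda^2-2$ of the denominator is nonnegative for even $q\ge4$, the denominator is positive for $\alpha>\alpha_0$, and clearing it in $U^{p-2}(\alpha\lambda)>\delta_1=1/(\lambda(1+\alpha))$ reduces to the quadratic $\lambda^2\alpha^2+(4-\lambda^2)\alpha-1>0$, whose positive root is exactly the stated $\alpha_0$. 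This computation is the heart of the argument and the step where subtleties can hide: one must confirm the sign of the denominator, verify that the reduction to the last iterate is legitimate (the orbit really increases and escapes only through $-\delta_1$, never through $0$ or below $l_0$), and apply the $B_{p-3}$ identity correctly. Granting it, $\alpha>\alpha_0$ is equivalent to $T_\alpha^{p-2}(-r_0)<-\delta_1$, and by the monotonicity step both $l_0$ and $-r_0$ then have initial block $(-1:1)^{p-1}$, as claimed.
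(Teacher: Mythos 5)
Your computation is correct and, in substance, it is the mirror image of the paper's: the paper pulls $-\delta_1$ backwards, computing $(S^{-1}T)^{-(p-2)}(-\delta_1)$ via the relations \eqref{bRelation} and comparing it with $-r_0$, while you push $-r_0$ forwards and compare $-U^{p-2}(\alpha\lambda)$ with $-\delta_1$; both routes land on the same quadratic $\lambda^2\alpha^2+(4-\lambda^2)\alpha-1>0$ and hence the stated $\alpha_0$. Your cylinder identification $\Delta(-1:1)\cap\mathbb I_\alpha=[\,l_0,-\delta_1)$, the conjugation of the branch $f(x)=-\lambda-1/x$ to $U$ by $x\mapsto -x$, the reduction from $l_0$ to $-r_0$ by order-preservation, and the $B_{p-3}$ identity are all fine. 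The direction of the computation, however, is not a cosmetic choice, and it is where your argument has a genuine gap.

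The gap is the collapse step. You need the implication: $x\in[\,l_0,-\delta_1)$ and $f^{p-2}(x)<-\delta_1$ (with $f^{p-2}$ the M\"obius iterate) imply $f^j(x)\in[\,l_0,-\delta_1)$ for all $j\le p-2$. Monotone increase of the orbit \emph{while it stays in the cylinder} does not give this: it can only be invoked once the memberships are known, and the identification $T_\alpha^{p-2}(-r_0)=-U^{p-2}(\alpha\lambda)$ itself presupposes those memberships, so the reduction as written is circular. Worse, the implication is false for general $x$: the iterated branch has poles \emph{inside} the cylinder (for $q\ge 6$ and $\alpha\le 1/2$ the point $-1/\lambda$ lies in $\Delta(-1:1)$ and is a pole of $f^2$), and a point $x$ slightly to the right of $-1/\lambda$ satisfies $f^2(x)<-\delta_1$ even though $f(x)>0$ has already escaped --- the M\"obius orbit escapes through the top and returns below $-\delta_1$, which is exactly what the single final inequality cannot detect. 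The repair is the paper's formulation: set $c_j:=f^{-j}(-\delta_1)$; since $f(y)>y$ for $y<0$ (your own observation), the preimages are nested, $c_{j+1}<c_j$, so $x<c_{p-2}$ gives inductively $f^j(x)<f^j(c_{p-2})=c_{p-2-j}\le-\delta_1$ and $f^j(x)\ge x\ge l_0$, with every iterate staying negative so that monotonicity of $f$ applies at each step; one then checks the single inequality $-r_0<c_{p-2}$, whose denominator $\alpha\lambda^2+2$ is trivially positive. By contrast, your forward inequality requires the sign of the denominator $(\lambda^2-2)\alpha-\lambda^2+3$, and your justification there is also incomplete: nonnegativity of $\lambda^2-2$ only makes it nondecreasing in $\alpha$, so positivity must still be verified at $\alpha_0$ (it amounts to $\alpha_0>(\lambda^2-3)/(\lambda^2-2)$ for $q\ge 8$), a check you do not perform and which the backward computation never needs.
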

%%%%%%%%%%%%%%%%%%%%%%%%%%%%%%%%%%%%%%%%
\begin{proof}
A point that lies to the left of the appropriate $p-2{\text{nd}}$ pre-image of $-\delta_1$ has ones for its first $p-1$ digits. This pre-image is given by $(S^{-1}T)^{-p+2}(-\delta_1)\,$, where we are using the matrices defined in Equation \eqref{matrices}.
 
But, 
\[
\begin{aligned}
(S^{-1}T)^{-p+2} &= (S^{-1}U^{-1}S)^{-p+2}\\
\\
                              & =  S^{-1}U^{p-2}S\\
                              \\
                             &= 
\left[ \begin{array}{cc}
	B_{p-1} -\lambda B_{p-2}   &  -B_{p-2}\\
	\\
 	B_{p-2}   &  B_{p-1}
\end{array}\right]\;.
\end{aligned}
\]
\noindent
Now, the relations of Equation ~\eqref{bRelation} give
\[ (S^{-1}T)^{-p+2} =   \frac{B_p}{2} 
\left[ \begin{array}{cc}
	 -\lambda^3 +3\lambda   &  -\lambda^2 + 2 \\
 	\lambda^2 -2    & \lambda
\end{array}\right]\;.
\]
Thus the $p-2$nd pre-image of $ - \delta_1$ is given by
$$
T_{\alpha}^{-p+2}(-\delta_1)= \frac{(\lambda^3-3\lambda)\delta_1-\lambda^2+2}{(-\lambda^2+2)\delta_1 +\lambda}=  - \frac{\alpha(\lambda^3-2\lambda)+\lambda}{\alpha\lambda^2+2}.
$$

In particular,  $-r_0$ starts with $p-1$ digits one if 
$$-r_0 = -\alpha \lambda <  - \frac{\alpha(\lambda^3-2\lambda)+\lambda}{\alpha\lambda^2+2}.$$ 
Rewriting this inequality yields that it holds whenever 
$$\alpha>\alpha_0 = \frac{\lambda^2-4+\sqrt{(4-\lambda^2)^2+4\lambda^2}}{2\lambda^2}.
$$
Finally, if $\alpha > \alpha_0$, then it immediately follows that $l_0$ also starts with $p-1$ ones since  $l_0 = (\alpha-1)\lambda <  -\alpha \lambda = -r_0$.
\end{proof}

%%%%%%%%%%%%%%%%%%%%%%%%%%%%%%%%%%%%
\begin{Lem}\label{lemEvenOrbitMatch}  For  $\alpha \in  (\alpha_0, 1/2)\,$, $r_p = l_p\,$.    Furthermore,  there is $d \in \mathbb N$ such that 
 \[ l_{p-1} \in \Delta(-1:d)\;\; \text{and}\;\; r_{p-1} \in \Delta(1:d-1)\;.\]
\end{Lem}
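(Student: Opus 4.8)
The plan is to mirror the classical argument of Lemma~\ref{orbitsMatch}, bootstrapping from Lemma~\ref{lem:rZeroDigsEven}. First I would exploit the fact that $T_\alpha(x)$ depends only on $|x|$, so that $T_\alpha$ is an even function and hence $r_1 = T_\alpha(r_0) = T_\alpha(-r_0)$; iterating gives $r_{p-1} = T_\alpha^{p-1}(-r_0)$. By Lemma~\ref{lem:rZeroDigsEven} both $l_0$ and $-r_0$ begin with the $p-1$ digits $(-1:1)$, so the first $p-1$ steps of each orbit are governed by the single branch $x \mapsto -\lambda - 1/x$, represented by the matrix $M_1 = \left[\begin{smallmatrix} -\lambda & -1 \\ 1 & 0 \end{smallmatrix}\right]$. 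Writing $N := M_1^{p-1}$, I therefore have $l_{p-1} = N(l_0)$ and $r_{p-1} = N(-r_0)$, with $N$ acting as a single fractional linear map.

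Next I would compute $N$ explicitly. Since $M_1$ agrees up to sign with $J\,U\,J$, where $J = \mathrm{diag}(1,-1)$ and $U$ is as in~\eqref{matrices}, one has $N = J\,U^{p-1}\,J$ as a M\"obius map. Substituting $U^{n} = \left[\begin{smallmatrix} B_{n+1} & -B_n \\ B_n & -B_{n-1}\end{smallmatrix}\right]$ with $n = p-1$ and invoking the relations~\eqref{bRelation} (which encode $U^q = \mathrm{Id}$), the matrix $N$ collapses, up to the scalar $B_p$, to $\left[\begin{smallmatrix} 1 & \lambda/2 \\ -\lambda/2 & 1 - \lambda^2/2 \end{smallmatrix}\right]$. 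Feeding in $l_0 = (\alpha-1)\lambda$ and $-r_0 = -\alpha\lambda$ then yields closed forms for $l_{p-1}$ and $r_{p-1}$; a direct check shows $l_{p-1} < 0 < r_{p-1}$, the positivity of $r_{p-1}$ being exactly where the bound $\alpha > \alpha_0$ re-enters (it guarantees the denominator $2 - \lambda^2(1-\alpha)$ is positive). This establishes $l_{p-1} \in \Delta(-1:d)$ and $r_{p-1} \in \Delta(+1:d')$ for some digits $d, d'$.

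The heart of the matter is the identity
\[
-\frac{1}{l_{p-1}} - \frac{1}{r_{p-1}} = \lambda,
\]
which I would verify by substituting the explicit values; it is the exact analogue of the constant ``$1$'' appearing in the classical computation $l_2 - r_2 = 1 + d' - d$, now scaled by $\lambda$. Granting this, since $l_{p-1}<0$ and $r_{p-1}>0$ one has $l_p = -1/l_{p-1} - \lambda d$ and $r_p = 1/r_{p-1} - \lambda d'$, whence
\[
l_p - r_p = \left(-\frac{1}{l_{p-1}} - \frac{1}{r_{p-1}}\right) + \lambda(d' - d) = \lambda\,(1 + d' - d).
\]
Both $l_p$ and $r_p$ lie in $\mathbb I_\alpha$, an interval of length $\lambda$, so $|l_p - r_p| < \lambda$; as $1 + d' - d$ is an integer this forces $1 + d' - d = 0$. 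Hence $d' = d-1$ and $l_p = r_p$, giving both assertions at once.

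I expect the main obstacle to be the matrix computation of $N$: getting the simplification of $U^{p-1}$ via~\eqref{bRelation} into the clean form above is what makes the reciprocal identity fall out, and it is precisely here that the order-$q$ relation for the Hecke group $G_{2p}$ is used. A secondary point requiring care is verifying the signs of $l_{p-1}$ and $r_{p-1}$ (equivalently, that $N$ carries the interval $(l_0, -r_0)$ across $0$), since the cylinder membership claims depend on them, and this is where the hypothesis $\alpha > \alpha_0$ is genuinely needed.
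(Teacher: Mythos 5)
Your proposal is correct and takes essentially the same route as the paper: the paper likewise uses Lemma~\ref{lem:rZeroDigsEven} to write $l_{p-1}$ and $r_{p-1}$ as images of $l_0$ and $-r_0$ under the single branch matrix $(S^{-1}T)^{p-1}$, evaluates that power via the relations~\eqref{bRelation} (your conjugation $N=JU^{p-1}J$ produces the identical projective matrix), obtains the identity $\left|1/l_{p-1}\right| - \left|1/r_{p-1}\right| = \lambda$, and concludes exactly as you do from the fact that $l_p - r_p$ is an integer multiple of $\lambda$ yet is a difference of two points of $\mathbb I_{\alpha}$. (One incidental check in your favor: your value $l_{p-1} = \lambda(2\alpha-1)/(2-\alpha\lambda^2)$ is the correct one --- the paper's displayed formula for $l_{p-1}$ omits a factor of $\lambda$ in the numerator, evidently a typo, since the reciprocal identity requires it.)
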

%%%%%%%%%%%%%%%%%%%%%%%%%%%%%%%%%%%%

\begin{proof} 
Using the previous lemma, we find
\[
\begin{aligned}
l_{p-1} = T_{\alpha}^{p-1} (l_0) &=  (S^{-1}T)^{p-1}\big(\, (\alpha-1)\lambda\, \big)\\
                                                        &=  \left[ \begin{array}{cc}
	 -\lambda    &  -1 \\
 	1  &0
\end{array}\right]  \left[ \begin{array}{cc}
	  \lambda&  \lambda^2 - 2 \\
 	-\lambda^2 +2    & -\lambda^3 +3\lambda   
\end{array}\right] (\, (\alpha-1)\lambda )\\
&=  \dfrac{(1-2 \alpha)}{\alpha \lambda^2 - 2}\;,
\end{aligned}
\]
and similarly, since  $T_{\alpha}(r_0) = T_{\alpha}(-r_0)$,  
\[ 
r_{p-1} = T_{\alpha}^{p-1} (-r_0)  = \dfrac{(2 \alpha - 1) \lambda}{(1-\alpha) \lambda^2 - 2}\;.
\]

Therefore,  
\[
\left|\, \dfrac{1}{l_{p-1}}\,\right| - \left|\, \dfrac{1}{r_{p-1}}\,\right| = \lambda\;.
\]
But,  then $T_{\alpha}(l_{p-1}) - T_{\alpha}(r_{p-1})$ is an integer multiple of $\lambda$.   However,  this is the difference of two elements of $\mathbb I_{\alpha}$, and thus this multiple must be zero.  We conclude that  $r_p = l_p\,$.   That the digits of these points is as claimed follows as in the classical case.
\end{proof}

%%%%%%%%%%%%%%%%%%%%%%%%%%%%%%%%%%%%%%%%%%%%%%%%%%%%%%%%%%%%%%Figure 6 %%%%%%%%%%%%%%%%%%%%%%%%%%%%%
\begin{figure}[!ht]
\includegraphics[height=45mm]{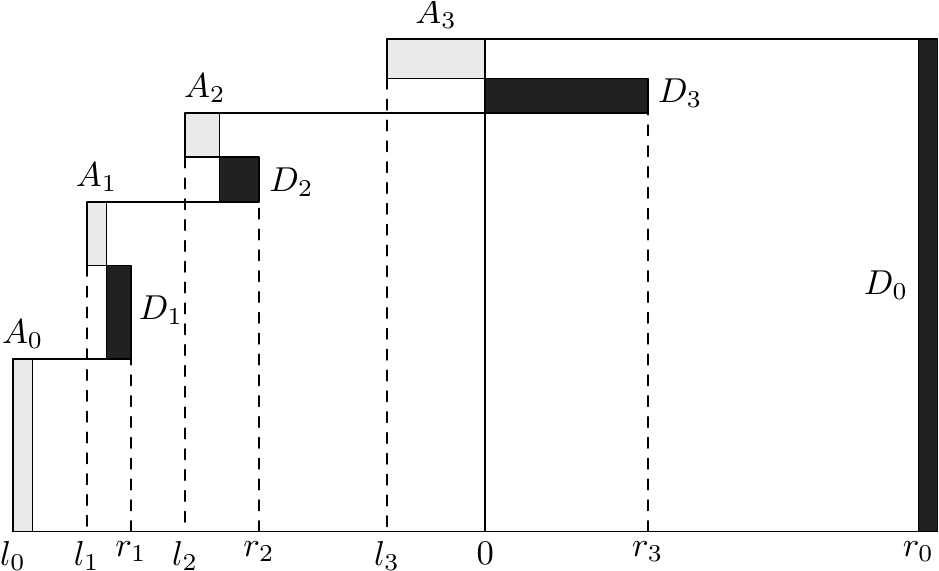}
\caption{The added and deleted rectangles for  $\alpha = 0.48$ in the natural extensions for $q=8\,$.   Here $A_i, D_i$ denote $\mathcal T^i(A_0), \mathcal T^i(D_0) \,$, respectively.}
\label{im: natural extension q=8 a =0.48}
\end{figure}
%%%%%%%%%%%%%%%%%%%%%%%%%%%%%%%%%%%%%%%%%%%%%%%%%%%%%%%%%%%%%%%%%%%%%%%%%%%%%%%%%%%%%%%%%%%

To describe the initial orbits of $A_0$ and $D_0$, we use the following sequence.  
\[ H_1 = \frac{1}{\lambda} \;\; \text{and} \;\; H_i = \frac{1}{\lambda-H_{i-1}}\;\; \text{ for}\; i \geq 2\,.\]

%%%%%%%%%%%%%%%%%%%%%%%%%%%%%%%%%%%%
\begin{Lem}  For  $\alpha \in  (\alpha_0, 1/2)\,$, $\mathcal T^p(A_0) = \mathcal T^p(D_0)\,$.
\end{Lem}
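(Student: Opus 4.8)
The goal is to show $\mathcal T^p(A_0) = \mathcal T^p(D_0)$ for even $q = 2p$, mirroring the classical case (Lemma \ref{filling in holes}) where the matching happened after 2 steps and was driven by $l_2 = r_2$. Here the driving fact is Lemma \ref{lemEvenOrbitMatch}, which gives $l_p = r_p$ together with the digit relation $l_{p-1} \in \Delta(-1:d)$ and $r_{p-1} \in \Delta(1:d-1)$.

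Let me think about the plan.

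---

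The plan is to track the $\mathcal T_\alpha$-orbits of the basic added region $A_0 = \mathcal T_\alpha(C)$ and the basic deleted region $D_0 = \mathcal T_{1/2}(C)$ step by step, showing that after $p$ applications of $\mathcal T := \mathcal T_\alpha$ they coincide. By Lemma \ref{buildDeletion}, $D_0 = \{(x,y) \in \Omega_{1/2} : x > \alpha\lambda\}$, i.e., $D_0$ is the strip over the interval $(\alpha\lambda, \lambda/2] = (r_0, \lambda/2]$. By analogy, $A_0 = \mathcal T_\alpha(C)$ should be the strip over $[\,(\alpha-1)\lambda, -(1/2-\text{something})\,) = [l_0, \cdot\,)$ at the left end, and I would first pin down its exact shape (presumably $A_0 = [l_0, -r_0) \times [0, H_{p-1})$ or similar, following the role of the sequence $H_i$). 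The initial $y$-fibers come from the $K_j$ in the Rosen region $\Omega = \bigcup J_j \times K_j$.

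**First steps.** I would compute $A_0$ explicitly: its $x$-projection is the image under the locally-monotone $T_\alpha$ of the change-of-digit cylinder ends, landing on $[l_0, -r_0)$ (using $l_1 = T_\alpha(l_0)$ and the fact that $r_0, -r_0$ have coinciding orbits after one step, per Lemma \ref{lem:rZeroDigsEven}). Its $y$-fiber is determined by the largest $y$-value attained, analogous to the classical computation giving the $H_i$ recursion. Then I would iterate. The key structural claim, which I would prove by induction on $i = 1, \dots, p-1$, is that at each stage $A_i := \mathcal T^i(A_0)$ and $D_i := \mathcal T^i(D_0)$ split into a "matched" part $A_i', D_i'$ (lying over the cylinders $\Delta(-1:d_i)$ and $\Delta(1:d_i-1)$ respectively, using Lemma \ref{lem:rZeroDigsEven}'s digit-one structure) on which $\mathcal T(A_i') = \mathcal T(D_i')$ already, and a "full-cylinder" part $A_i \setminus A_i'$, $D_i \setminus D_i'$ that projects onto unions of full cylinders. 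Because $\mathcal T$ identifies $\Delta(-1:m) \times K$ with $\Delta(+1:m-1) \times K$ under the fiber map $y \mapsto 1/(m\lambda + \varepsilon y)$, and because $g^2 = 1-g$ type identities are replaced here by the $H_i$ recursion and the $B_n$ relations \eqref{bRelation}, the full-cylinder parts have equal images under one more application of $\mathcal T$. This is exactly the mechanism of Lemma \ref{filling in holes}, now run for $p$ steps instead of $2$.

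**Where it culminates.** The induction terminates at step $p$. At the $(p-1)$st stage, Lemma \ref{lemEvenOrbitMatch} supplies $l_{p-1} \in \Delta(-1:d)$ and $r_{p-1} \in \Delta(1:d-1)$, which aligns the $x$-boundaries of the matched sub-rectangles $A_{p-1}'$ and $D_{p-1}'$; together with $l_p = r_p$ this forces $\mathcal T(A_{p-1}') = \mathcal T(D_{p-1}')$, giving a rectangle $[l_p, \cdot) \times [\,\cdot\,]$ just as in the classical $[\,l_2, \alpha) \times [\,\tfrac{2}{2d-1}, d-g)$ computation. The leftover full-cylinder strips $A_{p-1} \setminus A_{p-1}'$ and $D_{p-1} \setminus D_{p-1}'$ again have equal $\mathcal T$-images by the $\Delta(-1:m) \leftrightarrow \Delta(+1:m-1)$ identification on the fibers, and the result $\mathcal T^p(A_0) = \mathcal T^p(D_0)$ follows.

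**Main obstacle.** The hard part is verifying that the $y$-fibers match at every intermediate stage, not merely the $x$-projections. In the classical $\lambda=1$ case the fiber bookkeeping collapsed onto the single identity $g^2 = 1-g$; here the fibers are governed by the $L_j$ of the Rosen region $\Omega$ and by the auxiliary sequence $H_i = 1/(\lambda - H_{i-1})$, and one must show that the $H_i$ generated by iterating the added region interleave correctly with the $L_j$ (and with the images of the deleted fibers) at each of the $p$ steps, using the $B_n$-relations \eqref{bRelation} and $U^q = \mathrm{Id}$. Confirming this interlacing — i.e., that the fiber heights produced by the added strips exactly fill the fiber gaps left by the deleted strips over each full cylinder, for every $i \le p$ — is the step most prone to hidden casework, and I would organize it as the induction hypothesis carrying both the $x$-cylinder and $y$-fiber data simultaneously.
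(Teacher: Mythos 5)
Your final-step mechanism (splitting along $\Delta(-1:d)$ and $\Delta(+1:d-1)$ via Lemma~\ref{lemEvenOrbitMatch}, plus the $\Delta(-1:m)\leftrightarrow\Delta(+1:m-1)$ identification of full-cylinder layers) is exactly right, but the induction you propose to reach that step is structurally false. You claim that at \emph{each} stage $i=1,\dots,p-1$ the sets $A_i=\mathcal T^i(A_0)$ and $D_i=\mathcal T^i(D_0)$ decompose into matched parts with $\mathcal T(A_i')=\mathcal T(D_i')$ plus full-cylinder parts with equal $\mathcal T$-images. If that held already at $i=1$, it would give $\mathcal T^2(A_0)=\mathcal T^2(D_0)$, i.e.\ synchronization after two steps for every even $q$ --- which is false whenever $p>2$: the $x$-projections of $A_i$ and $D_i$ are the \emph{adjacent, disjoint} intervals $[\,l_i,\varphi_i\,)$ and $[\,\varphi_i,r_i\,)$, and these cannot coincide before step $p$, when $l_p=r_p$ finally glues them. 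In fact no splitting happens at intermediate stages at all: the whole point of Lemma~\ref{lem:rZeroDigsEven} is that every $x\in[\,l_0,-\lambda/2\,)$ has the same first $p-1$ digits $(-1:1)^{p-1}$ (note the right endpoint of the projection of $A_0$ is $-\lambda/2$, not $-r_0$; since $-\lambda/2<-r_0$, the lemma controls the entire strip), so $\mathcal T^{p-1}$ acts on $A_0=[\,l_0,-\lambda/2\,)\times[\,0,L_1\,)$ as a single M\"obius map, carrying it as \emph{one rectangle} to $[\,l_{p-1},0\,)\times[\,H_{p-1},1\,)$; likewise $D_0$, after one step lands on $[\,\varphi_1,r_1\,)\times[\,L_1,H_1\,)$, is carried as one rectangle to $[\,0,r_{p-1}\,)\times[\,L_{p-1},H_{p-1}\,)$. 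The objects your induction hypothesis manipulates (intermediate full-cylinder parts, intermediate matched parts over $\Delta(+1:d_i-1)$ --- which for the digit-one structure would be the nonexistent $\Delta(+1:0)$) simply do not exist before the last step.

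The same misunderstanding produces your ``main obstacle'': the fibers of $A_i$ and $D_i$ do \emph{not} match or interleave at intermediate stages --- they are the adjacent intervals $[\,H_i,L_{i+1}\,)$ and $[\,L_i,H_i\,)$ --- and nothing requires them to. All fiber bookkeeping is concentrated in the single final step, where the matching of images reduces to the two identities $L_{p-1}=\lambda-1$ (equivalently $1=1/(\lambda-L_{p-1})$) and $H_{p-1}=\lambda/2$, both consequences of the relations~\eqref{bRelation}. So the correct proof is \emph{simpler} than what you outline: Lemma~\ref{lem:rZeroDigsEven} keeps both orbits coherent for $p-1$ steps, and the mechanism of Lemma~\ref{filling in holes} is then run once, not $p-1$ times.
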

%%%%%%%%%%%%%%%%%%%%%%%%%%%%%%%%%%%%
\begin{proof}  One easily checks that the basic added region  and basic deleted regions are   
\[  A_0 = [\,l_0, - \lambda/2\,) \times [0, L_1)\,\;\text{and} \; \;D_0 = (r_0, \lambda/2\, ] \times [\,0, R)\,.\]
Since $ - \lambda/2 < - r_0\,$,  from Lemma ~\ref{lem:rZeroDigsEven}   we find that  all $x\in [\,l_0, - \lambda/2\,)$ share the same first $p-1$ of their $T_{\alpha}$-digits, and in fact that $T_{\alpha}^{p-1}(-\lambda/2)  = T_{1/2}^{p-1}(-\lambda/2)\,$.   But,  $T_{1/2}^{p-1}(-\lambda/2) = 0\,$.    Thus, recalling that $1 = 1/(\lambda - L_{p-1})$, we find that 
\[ \mathcal T^{p-1}(A_0) =  [\,l_{p-1}, 0) \times [\, H_{p-1}, 1\,)\;.\]
 
Paying attention to sign and orientation,  one finds that  $\mathcal T(A_0) =  [\, \varphi_1, r_1) \times [\, L_1, H_{1}\,)\,$, and thus
\[ \mathcal T^{p-1}(D_0) =  [\,0, r_{p-1}) \times [\, L_{p-1}, H_{p-1}\,)\;.\]

Analogously to the classical case,   with $d$ from Lemma~\ref{lemEvenOrbitMatch}, we let 
\[ A'_{p-1} := \mathcal T^{p-1}(A_0) \cap \mathcal D(-1: d)\,,\]
and 
\[ D'_{p-1} := \mathcal T^{p-1}(D_0) \cap \mathcal D(+1: d-1)\,.\]
Arguing analogously to the classical case, we find  that the $\mathcal T$  images of $A'_{p-1}$ and $D'_{p-1}$ agree, and then that the various layers from each of $ \mathcal T^{p-1}(A_0) \setminus A'_{p-1}$ and of $ \mathcal T^{p-1}(D_0) \setminus D'_{p-1}$ also agree.    The result follows. 
\end{proof}

%%%%%%%%%%%%%%%%%%%%%%%%%%%%%%%%%%%%%%%%%%%%%%%%%%%%%%%%%%%%%%Figure 7 %%%%%%%%%%%%%%%%%%%%%%%%%%%%%
\begin{figure}[!ht]
\scalebox{0.9}{\includegraphics[height=40mm]{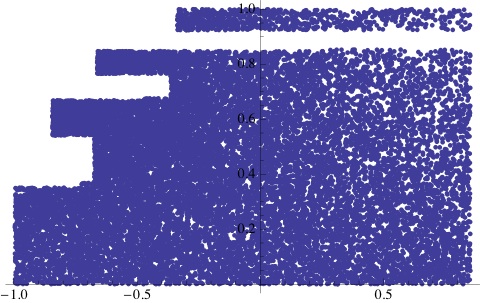}}
\scalebox{0.9}{\includegraphics[height=40mm]{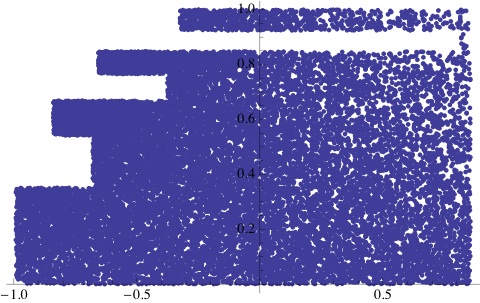}}
\caption{Change of topology at $\alpha = \alpha_0\,$:  Simulations of the natural extension for $q=8$ with on the left $\alpha = \alpha_1 - 0.001$ and on the right $\alpha = \alpha_1 +0.001$.}
\label{im: natural extension alpha bigger bound}
\end{figure}
%%%%%%%%%%%%%%%%%%%%%%%%%%%%%%%%%%%%%%%%%%%%%%%%%%%%%%%%%%%%%%%%%%%%%%%%%%%%%%%%%%%%%%%%%%%

%%%%%%%%%%%%%%%%%%%%%%%%%%%%%%%%%%%%%%%%
\begin{Lem}
\label{lem:smallerAlpNotConnEven}
The region   $\Omega_{\alpha_0}$ is not connected.
\end{Lem}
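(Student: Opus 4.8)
The plan is to exhibit an empty horizontal band inside $\Omega_{\alpha_0}$ that splits it into a nonempty lower piece and a nonempty upper piece. The whole argument turns on the observation that the defining quadratic for $\alpha_0$ is precisely the condition under which the ``bridge'' connecting these two pieces collapses to zero width.

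First I would record the effect of the critical value on the orbit of $r_0$. By Lemma~\ref{lem:rZeroDigsEven} the number $\alpha_0$ is characterized by the equality $T_{\alpha_0}^{p-2}(-r_0) = -\delta_1$; applying $T_{\alpha_0}$ once more (the point $-\delta_1$ now carrying digit $2$) gives $T_{\alpha_0}^{p-1}(-r_0) = l_0$, and since $T_\alpha(r_0) = T_\alpha(-r_0)$ this says $r_{p-1} = l_0$ at $\alpha = \alpha_0$. Equivalently, using the formula $r_{p-1} = (2\alpha-1)\lambda/((1-\alpha)\lambda^2 - 2)$ from the proof of Lemma~\ref{lemEvenOrbitMatch}, one checks that the identity $r_{p-1}(\alpha_0) = \alpha_0\lambda = r_0$ is exactly $\lambda^2\alpha_0^2 + (4-\lambda^2)\alpha_0 - 1 = 0$, i.e. the definition of $\alpha_0$. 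Thus, whereas for $\alpha \in (\alpha_0, 1/2)$ the deleted region $\mathcal T^{p-1}(D_0) = [0, r_{p-1}) \times [L_{p-1}, H_{p-1})$ leaves the strip $[r_{p-1}, r_0) \times [L_{p-1}, H_{p-1})$ undeleted — and it is exactly this surviving strip over positive $x$, where $\Omega_{1/2}$ has the full fiber $[0,1]$, that joins the part of $\Omega_\alpha$ below height $L_{p-1}$ to the part above $H_{p-1}$ — at $\alpha = \alpha_0$ this strip has collapsed: $\mathcal T^{p-1}(D_0) = [0, r_0) \times [L_{p-1}, H_{p-1})$ now removes the band over the \emph{entire} positive sub-interval.

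Next I would show that the open band $\mathbb I_{\alpha_0} \times (L_{p-1}, H_{p-1})$ meets $\Omega_{\alpha_0}$ nowhere. No added region can contribute a point there: the $y$-range of $\mathcal T^k(A_0)$ is $[H_k, L_{k+1})$ (with $H_0 = 0$ and $L_p = 1$), and from the recursions $H_i = 1/(\lambda - H_{i-1})$ and $L_i = 1/(\lambda - L_{i-1})$ one obtains the interleaving $H_{k-1} < L_k < H_k$, so these ranges are pairwise disjoint and the open interval $(L_{p-1}, H_{p-1})$ is one of the gaps between consecutive ones, covered by none. (By the synchronization $\mathcal T^p(A_0) = \mathcal T^p(D_0)$, which persists at $\alpha_0$ and is what makes part (ii) hold there, only the finitely many pieces $\mathcal T^k(A_0)$, $0 \le k \le p-1$, are genuinely added.) As for $\Omega_{1/2}$ itself: over negative $x$ its fibers are the $K_j = [0, L_j]$ with $\max_j L_j = L_{p-1}$, so they never reach into the open band, while over positive $x$ the band is precisely what is removed by $\mathcal T^{p-1}(D_0)$. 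Hence the band is empty; since $\Omega_{\alpha_0}$ contains points below $L_{p-1}$ (the bottom rows of $\Omega_{1/2}$) and points above $H_{p-1}$ (e.g. in $\mathcal T^{p-1}(A_0) = [l_{p-1}, 0) \times [H_{p-1}, 1)$), the empty strip separates it into two relatively clopen nonempty sets, so $\Omega_{\alpha_0}$ is disconnected.

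The main obstacle is the second step, namely verifying that the separating band is genuinely empty as an \emph{open} strip rather than merely pinched at a point. This is delicate exactly because at $\alpha_0$ several orbit points sit on cylinder boundaries (for instance $r_{p-1}$ lands on $l_0$, and $l_{p-1}$ on a point $-\delta_j$), so one must do the open/closed bookkeeping of the added and deleted rectangles carefully, and confirm that the higher iterates $\mathcal T^k(A_0)$ with $k \ge p$ — which no longer synchronize in the naive sense — contribute nothing inside $(L_{p-1}, H_{p-1})$. Once the interleaving inequalities and the collapse $r_{p-1} = r_0$ are in hand, this verification is routine but must be carried out with care.
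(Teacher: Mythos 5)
Your argument is correct and is essentially the paper's own proof: the paper likewise observes that at $\alpha=\alpha_0$ one has $r_{p-2}=-\delta_1$, so that $\mathcal T^{p-1}(D_0)=[\,0,\alpha_0\lambda\,)\times[\,L_{p-1},H_{p-1}\,)$ deletes the band over the \emph{entire} positive interval, and then simply asserts that the $\mathcal T$-orbit of $A_0$ can never fill this strip --- your interleaving $H_{k-1}<L_k<H_k$ and the open/closed bookkeeping make explicit what the paper leaves unsaid. One caution on your first paragraph: $r_{p-1}=l_0$ and ``$r_{p-1}(\alpha_0)=r_0$'' are not equivalent statements about the same quantity --- the actual orbit value is $l_0$ (since $-\delta_1$ carries digit $2$), whereas $r_0$ is the supremum of the digit-one branch over the half-open interval $[\,\varphi_{p-2},-\delta_1\,)$, and it is this supremum (not the orbit point $r_{p-1}$ itself) that forms the right edge of $\mathcal T^{p-1}(D_0)$; your later use is consistent with the second reading, but the word ``equivalently'' conflates the two.
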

%%%%%%%%%%%%%%%%%%%%%%%%%%%%%%%%%%%%%%%%
\begin{proof}    The proof of Lemma ~\ref{lem:rZeroDigsEven}  shows that if $\alpha = \alpha_0\,$, then $r_{p-2} = - \delta_1\,$.   But then 
\[ \mathcal T^{p-1}(D_0) =  [\,0, \alpha \lambda) \times [\, L_{p-1}, H_{p-1}\,)\;.\]
Thus,  since  the $\mathcal T$-orbit  of $A_0$  can never fill in this deleted strip,  we see that $\Omega_{\alpha_0}$ is indeed disconnected.    
\end{proof}

The arguments of Section ~\ref{secQuiltGood} then finish the proof of Theorem ~\ref{thmAnnounce} in this case of even index $q\,$.

\section{Odd $q\,$;  $\alpha \in (\, \alpha_0,1/2 \,]\,$}\label{SecOdd}  
Let $q=2h+3\,$, for $h\geq 1$.  
\subsection{Natural extensions for Rosen fractions} 
We recycle notation, now using $\varphi_j$ and $L_j$ as follows (all
necessary calculations are in~\cite{BKS}):
$$
\varphi_{0} = - \frac{\lambda}{2},\qquad \text{and}\quad \varphi_{j} =
T_{1/2}^{j} \left( - \frac{\lambda}{2} \right),\,\, 0 \le j \le 2h
+1.
$$
We recall that
\[
 \begin{array}{l} -\frac{\lambda}{2} \le \varphi_{j} < -
\frac{2}{3 \lambda} \quad \text{for} \quad j \in \{0,1,\ldots
,h -1
\}\, \cup \, \{h +1,\ldots ,2h \}\,, \\
\\
-\frac{2}{3 \lambda} < \varphi_{h} < - \frac{2}{5\lambda}\,,\; \text{and} \\
\\
\varphi_{2h+1} = 0\,.
\end{array}
\]
Also we put, with $R$ the positive root of $R^2+(2-\lambda )R-1=0$,
\[
\begin{array}{ccl}
L_1 &  =  &  \dfrac{1}{2 \lambda - L_{2h}}\,,\\
\\
L_2 &  =  &  \dfrac{1}{2 \lambda - L_{2h +1}}\,, \\
\\
L_{j} & = & \dfrac{1}{\lambda - L_{j-2}}, \quad 2 < j \le 2h +2
\; .
\end{array} 
\]

The domain, see Figure~\ref{im:oddNatExt1/2} ,  given in \cite{BKS} is  $\Omega\, =\, \bigcup_{j=1}^{2h+2} J_j\times K_j\,$,
where 
\[
\begin{aligned}
J_{2k}& = [\varphi_{h+k},\varphi_k), {\mbox{ for }} k\in \{ 1,\cdots , h \} ,\\
J_{2k+1}& = [\varphi_k,\varphi_{h+k+1}), {\mbox{ for }} k\in \{ 0,1,\cdots , h \} ,
\end{aligned}
\]
and $J_{2h+2}\, =\, [0,\frac{\lambda}{2})$. Let $K_j=[0,L_j]$ for
$j\in \{ 1,\cdots , 2h+1 \} $ and let $K_{2h+2} =[0,R]\,$.

Here, the normalizing constant $C$ such that $C d\mu$ gives a probability measure on $\Omega_{\alpha}$ is 
\begin{equation}\label{oddNormConst}
C = \dfrac{1}{\ln(1 + R\,)}\;,
\end{equation}
see Lemma 3.4 of \cite{BKS}.

%%%%%%%%%%%%%%%%%%%%%%%%%%%%%%%%%%%%%%%%%%%%%%%%%%%%%%%%%%%%%%%Figure 5 %%%%%%%%%%%%%%%%%%%%%%%%%%%%
\begin{figure}[!ht]
\includegraphics[height=40mm]{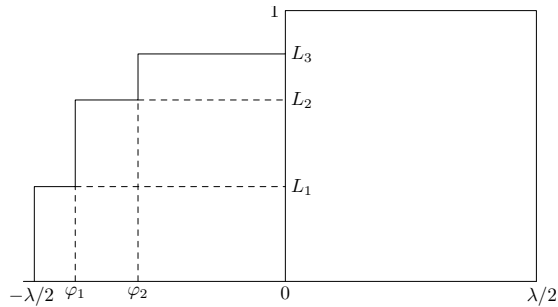}
\caption{The natural extension for $q=7$ and $\alpha = \frac12$.}
\label{im:oddNatExt1/2}
\end{figure}
%%%%%%%%%%%%%%%%%%%%%%%%%%%%%%%%%%%%%%%%%%%%%%%%%%%%%%%%%%%%%%%%%%%%%%%%%%%%%%%%%%%%%%%%%%%

  Similarly to the even case, for each of our $\alpha$,  the basic added region  and basic deleted regions are given by 
\[  A_0 = [\,l_0, - \lambda/2) \times [0, L_1)\,,\;\; D_0 = (r_0, \lambda/2] \times [0, R)\,.\]
In this case, using that $U$ is of order $2h+3$ we have that
\begin{equation}
\label{eq: B relations odd}
B_{h+1} = B_{h+2}, \quad B_h = (\lambda-1)B_{h+1} \quad \textrm{and} \quad  B_{h-1} = (\lambda^2-\lambda-1)B_{h+1}. 
\end{equation}

In this section we prove that for $q=2h+3$ one has that $l_{2h+2}=r_{2h+2}$ for all $\alpha \in \left(\alpha_0, \frac12\right)$ and consequently that the added blocks coincide with the deleted blocks after $2h+2$ steps.  Here also,  we use the fact that the orbits of $r_0$ and $-r_0$ coincide after one application of $T_{\alpha}\,$.

%%%%%%%%%%%%%%%%%%%%%%%%%%%%%%%%%%%%%%%%
\begin{Lem}
\label{lem: alpha_0 odd}
For any $\alpha > \alpha_0$, the $T_{\alpha}$-expansion of both $l_0$ and $-r_0$ starts as \newline $[\,(-1:1)^h,(-1,2),(-1,1)^h,\dots\,]\,$.
\end{Lem}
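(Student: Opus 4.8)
The plan is to follow the template of Lemma~\ref{lem:rZeroDigsEven} for even index, the one new wrinkle being the digit $2$ sitting in the middle of the prescribed word $w = [\,(-1:1)^h,(-1:2),(-1:1)^h\,]$. First I would reduce everything to a statement about a single cylinder boundary. Since each negative branch of $T_\alpha$ is locally increasing, the set of $x$ whose expansion begins with $w$ is an interval whose right-hand endpoint is governed by the first digit that changes as $x$ increases; as in the even case, I expect the binding constraint to be that the \emph{last} prescribed digit (the $(2h+1)$-st, a $1$) not yet increase to a $2$, i.e.\ that $T_\alpha^{2h}(x) < -\delta_1$. Thus a point lies to the left of the appropriate $2h$-th pre-image of $-\delta_1$ --- taken along the branches of the first $2h$ letters of $w$ --- exactly when its expansion begins with $w$. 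It therefore suffices to locate $-r_0 = -\alpha\lambda$ relative to this pre-image.

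The core computation is the M\"obius map of the first $2h$ letters,
\[ M_0 = (S^{-1}T)^{h-1}(S^{-2}T)(S^{-1}T)^{h}, \]
where $S^{-1}T$ and $S^{-2}T$ are the matrices (cf.\ \eqref{matrices}) realizing the branches $(-1:1)$ and $(-1:2)$. Using $S^{-1}T = S^{-1}U^{-1}S$ one has $(S^{-1}T)^{m} = S^{-1}U^{-m}S$, so each pure block is expressible through the $B_n$ of \eqref{eq: Bn }; the single factor $S^{-2}T = S^{-2}U^{-1}S$ is the only departure from the even-index case, where $M_0$ was a pure power. Multiplying out and then applying the order-$(2h+3)$ relations \eqref{eq: B relations odd} (so that $B_h=(\lambda-1)B_{h+1}$, $B_{h-1}=(\lambda^2-\lambda-1)B_{h+1}$, and $B_{h-2}=(\lambda^3-\lambda^2-2\lambda+1)B_{h+1}$) collapses $M_0$, up to the scalar $B_{h+1}^2$ that is irrelevant for the fractional-linear action, to a matrix with constant entries in $\lambda$. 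Feeding $-\delta_1 = -1/(\lambda(1+\alpha))$ through $M_0^{-1}$, or equivalently comparing $M_0\cdot(-\alpha\lambda)$ with $-\delta_1$ and clearing denominators, I expect the inequality $-r_0 < M_0^{-1}(-\delta_1)$ to reduce to
\[ \lambda^2\alpha^2 + (4-2\lambda)\alpha - 1 > 0, \]
whose positive root is precisely the $\alpha_0$ of Theorem~\ref{thmAnnounce}(i) for odd $q$. Hence $-r_0$ begins with $w$ for all $\alpha > \alpha_0$.

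Two loose ends remain. First I would confirm that the last digit is genuinely the binding constraint, by checking that the competing failures are non-binding on $(\alpha_0, 1/2)$ --- in particular that the middle digit neither drops to $1$ nor rises to $3$; the ``rise to $3$'' condition $T_\alpha^{h}(x) < -\delta_2$ reduces to a second quadratic whose difference with the displayed one factors as $(\lambda-1)(\lambda-4\alpha)$, which I can sign on the relevant range, and the earlier $1$-digits are less amplified and so even safer. Second, since $l_0 = (\alpha-1)\lambda < -\alpha\lambda = -r_0$ and the cylinder $\Delta(w)$ extends leftward to the endpoint $l_0$, the same word opens the expansion of $l_0$; here the identity $T_\alpha(r_0) = T_\alpha(-r_0)$ (so that $T_\alpha^{2h}(-r_0) = r_{2h}$) lets me phrase the threshold directly in terms of the orbit of $r_0$, matching the even-index bookkeeping.

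The main obstacle is the matrix step: the interposed factor $S^{-2}T$ breaks the clean ``power of $S^{-1}T$'' structure of Lemma~\ref{lem:rZeroDigsEven}, so the reduction via \eqref{eq: B relations odd} is bulkier and one must carry $B_{h-2}$ as well. Verifying that it is the last digit --- not the middle $2$ --- that first fails as $\alpha$ decreases is the only conceptual (as opposed to computational) point, and it is what pins down $\alpha_0$ rather than some larger threshold.
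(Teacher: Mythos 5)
Your proposal is correct and follows the paper's own proof essentially step for step: the same reduction to three digit conditions with $T_\alpha^{2h}(-r_0) < -\delta_1$ identified as the binding one, the same matrix product $(S^{-1}T)^{h-1}S^{-2}T(S^{-1}T)^{h}$ collapsed via the order-$q$ relations among the $B_n$, and the same quadratic $\lambda^2\alpha^2 + (4-2\lambda)\alpha - 1 > 0$ whose positive root is exactly $\alpha_0$ (the paper simply asserts the non-binding checks are ``easily'' done, while you sketch them, correctly, via the $(\lambda-1)(\lambda-4\alpha)$ comparison). One small caution: your final remark that ``the cylinder $\Delta(w)$ extends leftward to the endpoint $l_0$'' is not automatic --- it is precisely the ``middle digit of $l_0$ does not drop to $1$'' inequality $T_\alpha^{h}(l_0) \geq -\delta_1$ that you listed earlier as a check, so it must be carried out (it reduces to $\alpha > \sqrt{\lambda-1}/\lambda$, which is indeed weaker than $\alpha > \alpha_0$), not assumed.
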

%%%%%%%%%%%%%%%%%%%%%%%%%%%%%%%%%%%%%%%%
\begin{proof}
We recall that $-\delta_1 < \varphi_h < -\delta_2$.  We certainly have that $l_0 < \varphi_0 <r_0\,$;  it immediately follows that $T_{\alpha}^{h}(l_0) < \varphi_h< -\delta_2$ and that $T_{\alpha}^{h}(-r_0) > \varphi_h> -\delta_1$. To prove the lemma it suffices to show that  $T_{\alpha}^{h}(l_0) > -\delta_1$, $T_{\alpha}^{h}(r_0) < -\delta_2$ and  $T_{\alpha}^{2h}(-r_0) < \delta_1$.    We only show the last of these inequalities, because one can easily check that it imposes the strongest restriction on the value of $\alpha\,$.   

Assuming the first two conditions are met we find that
\[
T_{\alpha}^{2h}(-r_0)= (S^{-1}T)^{h-1}S^{-2}T(S^{-1}T)^h (-r_0).
\]
Again,
\[
\begin{aligned}
(S^{-1}T)^h &= (S^{-1}U^{-1}S)^h \\
\\
                              & =  S^{-1}U^{-h}S                              \\
\\
                             &= 
\left[ \begin{array}{cc}
	-B_{h+2}    &  -B_{h+3}\\
 	B_{h+3}   &  B_{h+4}
\end{array}\right]\;.
\end{aligned}
\]
It easily follows that
$$
(S^{-1}T)^{h-1} = \left[ \begin{array}{cc}
	-B_{h+3}    &  -B_{h+4}\\
 	B_{h+4}   &  B_{h+5}
\end{array}\right]\;.
$$

We find that
\[
\begin{aligned}
(S^{-1}T)^{h-1}S^{-2}T(S^{-1}T)^h&= \left[ \begin{array}{cc}
	-B_{h+3}    &  -B_{h+4}\\
 	B_{h+4}   &  B_{h+5}
\end{array}\right]\;
\left[ \begin{array}{cc}
	-2 \lambda     &  -1 \\
 	1 &  0
\end{array}\right]\;
\left[ \begin{array}{cc}
	-B_{h+2}    &  -B_{h+3}\\
 	B_{h+3}   &  B_{h+4}
\end{array}\right] \\
\\
&= 
\left[ \begin{array}{cc}
	-B^2_{h+2}-B_{h+1}B_{h+3}    &  -2B_{h+2}B_{h+3}\\
	\\
 	B_{h+2}B_{h+3} +B_{h+1}B_{h+4}  &  B_{h+2}B_{h+4}+B^2_{h+3}
\end{array}\right] .
\end{aligned}
\]
Using the relations~(\ref{eq: B relations odd}) yields that
\[
T_{\alpha}^{2h}(-r_0)= B^2_{h+1}
\left[ \begin{array}{cc}
	\lambda     &  2(\lambda -1)\\
 	2 -\lambda^2   &   (3-2\lambda )\lambda
\end{array}\right].
\]
Substituting $r_0 = \alpha \lambda $ gives
$$
T_{\alpha}^{2h}(-r_0) =\frac{-\alpha \lambda^2 +2(\lambda -1)}{\alpha \lambda^3 -2\alpha\lambda +(3-2\lambda)\lambda }.
$$
A calculation shows that $\frac{-\alpha \lambda^2 +2(\lambda -1)}{\alpha \lambda^3 -2\alpha\lambda +(3-2\lambda)\lambda } < -\delta_1$ whenever
$$
\alpha > \alpha_0 = \frac{\lambda -2 +\sqrt{2\lambda^2-4\lambda+4}}{\lambda^2}\,.
$$
\end{proof}

%%%%%%%%%%%%%%%%%%%%%%%%%%%%%%%%%%%%%%%%
\begin{Lem}
For $\alpha \in (\alpha_0,1/2)$ one has that $\mathcal{T}^{2h+2} (A_0) = \mathcal{T}^{2h+2} (D_0)\,$. 
\end{Lem}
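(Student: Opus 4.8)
The plan is to follow the even-index argument of the previous section almost verbatim, the bookkeeping being heavier only because the shared initial itinerary is $[(-1:1)^h,(-1:2),(-1:1)^h,\dots]$ rather than a string of ones, and because the odd-index fibers $K_j=[0,L_j]$, $K_{2h+2}=[0,R]$ obey the coupled relations for $L_1$ and $L_2$ and the quadratic $R^2+(2-\lambda)R-1=0$ rather than a single golden identity. First I would record the explicit forms $A_0=[\,l_0,-\lambda/2)\times[0,L_1)$ and $D_0=(r_0,\lambda/2]\times[0,R)$ noted just before the statement. Since $l_0<-\lambda/2<-r_0$, Lemma~\ref{lem: alpha_0 odd} shows that every $x\in[\,l_0,-\lambda/2)$ shares the first $2h+1$ $T_\alpha$-digits of $-r_0$, and that $T_\alpha^{2h+1}(-\lambda/2)=T_{1/2}^{2h+1}(-\lambda/2)=\varphi_{2h+1}=0$. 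Pushing $A_0$ forward along this common cylinder tower then yields $\mathcal T^{2h+1}(A_0)$ as an explicit rectangle with base $[\,l_{2h+1},0)$, its fiber endpoints read off from the $L_j$-recursion. The mirrored computation for $D_0$, whose orbit is governed by the same itinerary from step one onward because $T_\alpha(r_0)=T_\alpha(-r_0)$, gives $\mathcal T^{2h+1}(D_0)$ as an explicit rectangle with base $[\,0,r_{2h+1})$.

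The second ingredient is the orbit match announced at the head of this section, namely $l_{2h+2}=r_{2h+2}$ together with $l_{2h+1}\in\Delta(-1:d)$ and $r_{2h+1}\in\Delta(+1:d-1)$ for a common $d\in\N$. I would establish it exactly as in Lemma~\ref{lemEvenOrbitMatch}: compute $l_{2h+1}=T_\alpha^{2h+1}(l_0)$ and $r_{2h+1}=T_\alpha^{2h+1}(-r_0)$ by multiplying the matrix product $(S^{-1}T)^{h-1}S^{-2}T(S^{-1}T)^h$ already assembled in the proof of Lemma~\ref{lem: alpha_0 odd} by one further factor of $S^{-1}T$ on the left, reduce the entries using the order-$(2h+3)$ relations \eqref{eq: B relations odd}, and verify the key identity $\left|1/l_{2h+1}\right|-\left|1/r_{2h+1}\right|=\lambda$. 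Then $T_\alpha(l_{2h+1})-T_\alpha(r_{2h+1})$ is an integer multiple of $\lambda$; being a difference of two elements of $\mathbb I_\alpha$, an interval of length $\lambda$, it must vanish, giving both $d$ as claimed and $l_{2h+2}=r_{2h+2}$.

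With both ingredients in place the quilting step is formally identical to the classical and even cases. Setting $A'_{2h+1}:=\mathcal T^{2h+1}(A_0)\cap\mathcal D(-1:d)$ and $D'_{2h+1}:=\mathcal T^{2h+1}(D_0)\cap\mathcal D(+1:d-1)$, the orbit match forces $\mathcal T(A'_{2h+1})=\mathcal T(D'_{2h+1})$, a single common rectangle based on $[\,l_{2h+2},\alpha\lambda)$. The complements $\mathcal T^{2h+1}(A_0)\setminus A'_{2h+1}$ and $\mathcal T^{2h+1}(D_0)\setminus D'_{2h+1}$ each project onto a tower of full cylinders, and the cylinder-by-cylinder identity $\mathcal T\bigl(\Delta(-1:m)\times K\bigr)=\mathcal T\bigl(\Delta(+1:m-1)\times K'\bigr)$ matches them layer for layer, the coincidence of fiber heights now following from the $L_j$-recursion and the relation $R^2+(2-\lambda)R-1=0$ in place of the even-case identity. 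Reassembling the matched core with the matched layers gives $\mathcal T^{2h+2}(A_0)=\mathcal T^{2h+2}(D_0)$.

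The main obstacle is the fiber bookkeeping of the first step rather than any single identity: because of the interior digit $(-1:2)$ the cylinder tower over $[\,l_0,-\lambda/2)$ is not uniform, so tracking how the fiber $[0,L_1)$ is transported and re-sliced as the orbit passes through the $(-1:2)$ stage, and checking that the resulting heights coincide with those coming from $D_0$ through the coupled relations $L_1=1/(2\lambda-L_{2h})$ and $L_2=1/(2\lambda-L_{2h+1})$, is where care is needed. Once these heights are pinned down, measure preservation of $\mathcal T$ and the orbit match make the final identification routine.
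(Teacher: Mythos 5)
Your proposal is correct and takes essentially the same route as the paper: the paper likewise computes $l_{2h+1}$ and $r_{2h+1}$ by appending one factor of $S^{-1}T$ to the matrix product assembled in the proof of Lemma~\ref{lem: alpha_0 odd}, reduces via the relations~\eqref{eq: B relations odd} to the matrix $\left[\begin{smallmatrix} -2 & -\lambda \\ \lambda & 2\lambda-2 \end{smallmatrix}\right]$, verifies the key identity $\left|1/r_{2h+1}\right|-\left|1/l_{2h+1}\right|=-\lambda$ (equivalent to yours), and then concludes $l_{2h+2}=r_{2h+2}$ and $\mathcal T^{2h+2}(A_0)=\mathcal T^{2h+2}(D_0)$ by ``arguments completely analogous to the even case.'' The only difference is one of detail: you spell out the pushforward of $A_0$ and $D_0$, the splitting into $A'_{2h+1}$, $D'_{2h+1}$, and the layer-by-layer matching of the complements, which the paper leaves implicit as an analogy with Sections~\ref{classCaseSec} and~\ref{SecEven}.
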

%%%%%%%%%%%%%%%%%%%%%%%%%%%%%%%%%%%%%%%%
\begin{proof}
Thanks to Lemma~\ref{lem: alpha_0 odd} it is easy to compute $l_{2h+1}$ and $r_{2h+1}\,$.   We have
\[
\begin{aligned}
l_{2h+1} = T^{2h+1} (l_0) &= \left[ \begin{array}{cc}
	-2   &  -\lambda \\
 	\lambda    &  2 \lambda -2
\end{array}\right]\left((\alpha-1)\lambda \right) \\
\\
& = \frac{-2(\alpha-1)\lambda - \lambda }{(\alpha-1)\lambda^2+2\lambda -2 }\,,\\
\\
r_{2h+1} = T^{2h+1} (r_0) &= \left[ \begin{array}{cc}
	-2   &  -\lambda \\
 	\lambda    &  2 \lambda -2
\end{array}\right]\left(-\alpha \lambda \right) \\
\\
& = \frac{-2\alpha\lambda - \lambda }{\alpha\lambda^2+2\lambda -2 }\;.\\
\end{aligned}
\]
We thus find that
$$
\left|\frac{1}{r_{2h+1}}\right| - \left|\frac{1}{l_{2h+1}}\right| = \frac{-2\alpha\lambda^2+\lambda^2  }{(2\alpha-1)\lambda } = -\lambda\,.
$$
Arguments completely analogous to the even case now give that  that $l_{2h+2} = r_{2h+2}$ and in fact that $\mathcal{T}^{2h+2} (A_0) = \mathcal{T}^{2h+2} (D_0)\,$.
\end{proof}

%%%%%%%%%%%Figure 8 %%%%%%%%%%%%%%%%%%%%%%%%%%%
\begin{figure}[!ht]
\includegraphics[height=40mm]{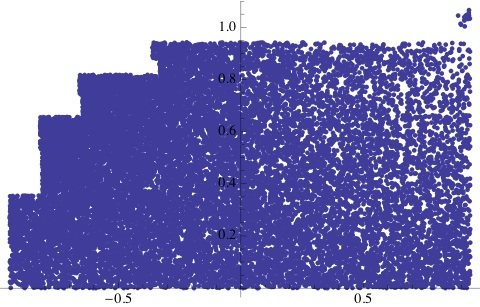}
\includegraphics[height=40mm]{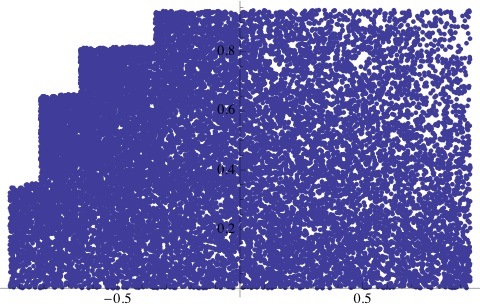}
\caption{Change of topology at $\alpha = \alpha_0\,$: Simulations of the natural extension for $q=9\,$;  on  left   $\alpha = \alpha_1 - 0.001\,$, on right $\alpha = \alpha_0 +0.001$.}
\label{im: natural extension alpha bigger bound}
\end{figure}
%%%%%%%%%%%%%%%%%%%%%%%%%%%%%%%%%%%%%%%%%%%%%%%%%%%%% 

Arguing as in Lemma ~\ref{lem:smallerAlpNotConnEven} and completing with the arguments of Section ~\ref{secQuiltGood}  finish the proof of Theorem ~\ref{thmAnnounce} in this case of odd index $q\,$.

%%%%%%%%%%%%%%%%%%%%%%%%%%%%%%%%%% 
\section{Large $\alpha$,  by way of Dajani {\em et al}}\label{secDKSlargeAlpha}   
We finish the proof of Theorem ~\ref{thmAnnounce} by appropriately interpreting results of \cite{DKS}.

\subsection{Successful quilting for $\alpha \in (1/2,  \omega_0\,]$}\label{subsecDKSquilts}    
As already stated in our introduction, \cite{DKS} shows that the domain $\Omega_{\alpha}$ is connected for all $q$ and all $\alpha \in (\,1/2, 1/\lambda\,]\,$.     Thus with the above,  part (i) of Theorem ~\ref{thmAnnounce} follows.   

For even $q$ and $\alpha \in (\,1/2, 1/\lambda\,]\,$,  Theorem 2.2 of \cite{DKS}  shows that $r_p = l_p$ and that the digits of $r_{p-1}$ and $l_{p-1}$ agree up to sign and a shift (of the correct sign) by one.   From this, just as above,  one can in fact show that $\mathcal T^p(A_0)  = \mathcal T^p(D_0)$ in these cases as well.   

For odd $q$ and $\alpha \in (\,1/2, \omega_0\,)\,$,  Theorem 2.9 of \cite{DKS}  shows that $r_{2h +2} = l_{2h + 2}$ and that the digits of their orbit predecessors agree up to sign and a shift by one.   Thus, here  one can show that $\mathcal T^{2h +2}(A_0)  = \mathcal T^{2h +2}(D_0)\,$.

Using the results of Section ~\ref{secQuiltGood}, we thus have that part (ii) of Theorem ~\ref{thmAnnounce}  holds.

%%%%%%%%%%%%%%%%%%%%%%%%%%%%%%%%%% 
\subsection{Nearly successful quilting and unequal entropy}\label{subsecQuiltBad}   

 We now show that the entropy of $T_{\alpha}$  is {\em not} equal to that of $T_{1/2}$ for $\alpha_0 > \omega_0$ in the case of odd $q\,$.     Indeed, for these values,   the results of \cite{DKS} show that although the natural extensions remain connected,   the conditions for successful quilting are not fully satisfied.  
 
%%%%%%%%%%%%%%%%%%%%%%%%%%%%%%%%%% 
\begin{Lem}\label{quiltNotSameNotIso}  With notation as above,  suppose that there are distinct natural numbers $k, k'$ such that 
\[\mathcal T^{k}_{\alpha}(A_0) = \mathcal T^{k'}_{\alpha}(D_0)\,.\]
  Then the entropy of $(\mathcal T_{\alpha}, \Omega_{\alpha},\overline{ \mathcal B}_{\alpha}, \mu\,)$  differs from that of
$(\mathcal T_{1/2}, \Omega_{1/2}, \overline{\mathcal B}_{1/2}, \mu\,)\,$.
\end{Lem}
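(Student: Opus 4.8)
The plan is to turn the statement into a comparison of the $\mu$-areas of the two domains. By Rohlin's formula, the entropy of $T_\alpha$ for its normalized invariant measure (which, by \cite{Roh}, equals that of the natural extension $\mathcal T_\alpha$) is
\[ h(T_\alpha)=\frac{1}{\mu(\Omega_\alpha)}\int_{\Omega_\alpha}\log|T_\alpha'(x)|\,d\mu , \]
and since $|T_\alpha'(x)|=1/x^2$ does not depend on $\alpha$, the integrand $\Phi(x,y):=\log(1/x^2)$ is one fixed function. Thus everything reduces to understanding the two numbers $N_\alpha:=\int_{\Omega_\alpha}\Phi\,d\mu$ and $\mu(\Omega_\alpha)$, and I will show that $N_\alpha$ does not change while $\mu(\Omega_\alpha)$ must.

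First I would compute the area. Applying $\mathcal T_\alpha$ to the hypothesis $\mathcal T^k_\alpha(A_0)=\mathcal T^{k'}_\alpha(D_0)$ gives $\mathcal T^{k+m}_\alpha(A_0)=\mathcal T^{k'+m}_\alpha(D_0)$ for all $m\ge 0$, so the forward orbits of $A_0$ and $D_0$ share a common tail but reach it after different numbers of steps. Hence, up to $\mu$-null sets, $\Omega_\alpha$ is obtained from $\Omega_{1/2}$ by adjoining the $k$ blocks $A_0,\dots,\mathcal T^{k-1}_\alpha(A_0)$ and deleting the $k'$ blocks $D_0,\dots,\mathcal T^{k'-1}_\alpha(D_0)$. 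Because $A_0=\mathcal T_\alpha(C)$ and $D_0=\mathcal T_{1/2}(C)$ and both operators preserve $\mu$, each of these blocks has area $\mu(C)$; checking, exactly as in the successful-quilting lemmas, that the added blocks lie outside $\Omega_{1/2}$, the deleted ones inside it, and that within each family the blocks are pairwise disjoint before the common tail begins, I obtain
\[ \mu(\Omega_\alpha)-\mu(\Omega_{1/2})=(k-k')\,\mu(C). \]
Since $\alpha\neq 1/2$ forces $\mu(C)>0$ and $k\neq k'$, this is nonzero.

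The crux is the invariance of $N_\alpha$. I would derive it from a coboundary identity for $\mathcal T_\alpha$: writing $y_1$ for the second coordinate of $\mathcal T_\alpha(x,y)$, a direct computation from~\eqref{def: T(x,y)_alpha} gives
\[ \log|x|=\log|1+xy|-\log|1+(T_\alpha x)\,y_1|+\log|y_1| . \]
With $\psi:=\log|1+xy|$ and $v:=\log|y|$ this reads $\log|x|=\psi-\psi\circ\mathcal T_\alpha+v\circ\mathcal T_\alpha$; integrating $\Phi=-2\log|x|$ over $\Omega_\alpha$ and using that $\mathcal T_\alpha$ is a $\mu$-preserving bijection of $\Omega_\alpha$ collapses the coboundary and yields $N_\alpha=-2\int_{\Omega_\alpha}v\,d\mu=-2\int_{\Omega_\alpha}\log|y|\,d\mu$. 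It then remains to see that this last integral is independent of $\alpha$; here one uses that the $y$-fibres occurring in $\Omega_\alpha$ (the values $L_j$, $H_i$, $R$) depend only on $q$, together with the same orbit matching, to conclude that the signed contribution of the symmetric difference $\Omega_\alpha\,\triangle\,\Omega_{1/2}$ to $\int\log|y|\,d\mu$ vanishes. Consequently $N_\alpha=N_{1/2}=\tfrac{(q-2)\pi^2}{2q}$, the value recorded after Theorem~\ref{thmAnnounce}.

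Combining the two, $h(T_\alpha)=N_{1/2}/\mu(\Omega_\alpha)$ with $\mu(\Omega_\alpha)\neq\mu(\Omega_{1/2})$, so $h(T_\alpha)\neq h(T_{1/2})$, and in fact the sign of the difference is governed by the sign of $k'-k$, which is the monotonicity phenomenon alluded to in the comparison with \cite{NN}. The main obstacle is exactly the $\alpha$-independence of $N_\alpha$: because the pre-tail orbit segments now have \emph{different} lengths $k\neq k'$, the contributions no longer cancel termwise as in the successful case (Proposition~\ref{quiltGivesIso}), and one must argue through the coboundary reduction---or an equally careful telescoping of the $\log|y|$-integrals along the matched tail---to confirm that the area is the only quantity that moves.
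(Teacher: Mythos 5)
Your reduction via Rohlin's formula to the pair $\bigl(N_\alpha,\mu(\Omega_\alpha)\bigr)$ is a legitimate idea, and the coboundary identity you write down is correct; but the step that carries the entire proof --- the area formula $\mu(\Omega_\alpha)-\mu(\Omega_{1/2})=(k-k')\,\mu(C)$ --- is false, and it fails precisely in the staggered situation $k\neq k'$ that this lemma is about. The clean bookkeeping you import from the successful case (added blocks pairwise disjoint and outside $\Omega_{1/2}$, deleted blocks pairwise disjoint and inside it, common tail ignorable) is exactly what breaks when the two orbits merge after different numbers of steps: the blocks then straddle cylinder boundaries and overlap one another (already for $q=3$, $\alpha>g$, the marginal of $D_0$ meets two cylinders and one piece of $\mathcal T_\alpha(D_0)$ overlaps a piece of $\mathcal T_\alpha(A_0)$), and the common tail is not a negligible appendage: it is a $\mathcal T_\alpha$-invariant set of positive measure, hence has \emph{full} measure in $\Omega_\alpha$ by ergodicity, so no cancellation argument applied to it can control $\Omega_\alpha$. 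The numbers refute the formula in the one case where everything is known --- the very case this section of the paper is calibrated against. For $q=3$ the odd-index formula gives $\omega_0=g$, and for $\alpha\in(g,1]$ Nakada's results (cited in the paper as what is being ``confirmed'') give $\mu(\Omega_\alpha)=\ln(1+\alpha)$ and $h(T_\alpha)=\pi^2/\bigl(6\ln(1+\alpha)\bigr)$. Thus the true area difference $\ln\frac{1+\alpha}{1+g}$ tends to $0$ as $\alpha\downarrow g$, whereas your formula makes its absolute value at least $\mu(C)=\ln\frac{1+g^2(\alpha-1)}{1-g^2/2}\to\ln\frac{1-g^4}{1-g^2/2}>0$, bounded away from zero; moreover, under the paper's convention $k'=k+1$ your formula would make $\Omega_\alpha$ \emph{smaller} and hence the entropy \emph{larger} than at $\alpha=1/2$, the opposite of the known decrease. (You also cannot retreat to saying your $\Omega_\alpha$ is the quilting-defined set rather than the true natural-extension domain: Rohlin's formula in the form you use requires the normalized marginal of $\mu$ on $\Omega_\alpha$ to be the $T_\alpha$-invariant measure, i.e.\ requires $\Omega_\alpha$ to coincide mod $\mu$ with that domain.)

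It is worth noting that your other claim --- constancy of $N_\alpha=\int_{\Omega_\alpha}\log(1/x^2)\,d\mu$ --- happens to be true in the known cases ($N_\alpha=\pi^2/6$ for $q=3$), but your justification for it is the same symmetric-difference cancellation that underlies the false area formula; since the two claims together would force the entropy to jump at $\omega_0$ while it is in fact continuous there, no argument at this level of softness can establish both. This is why the paper's own proof never computes an area: it induces the $\alpha$-system past one ``copy'' of $A_0$ (respectively the $1/2$-system past a copy of $D_0$), argues that the induced system is isomorphic to the other natural extension, and then Abramov's formula forces the two entropies to differ by a multiplicative factor equal to the relative measure of the set induced upon --- a factor $\neq 1$ --- with no knowledge of $\mu(\Omega_\alpha)$ required. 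To salvage your route you would have to replace the block count by the actual measures of the natural-extension domains of \cite{DKS} for $\alpha>\omega_0$, at which point the Rohlin computation no longer does any independent work.
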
 
%%%%%%%%%%%%%%%%%%%%%%%%%%%%%%%%%% 

\begin{proof}(Sketch)  If $k < k'$,  then we can produce a new system $(\Omega_{\alpha}', T'_{\alpha}, \overline{ \mathcal B}'_{\alpha}, \mu\,)$ by  inducing past one ``copy'' of $A_0\,$.   This system can be shown to be isomorphic to $(\mathcal T_{1/2}, \Omega_{1/2}, \overline{\mathcal B}_{1/2}, \mu\,)\,$.   But, by the Abramov formula \cite{A},   the induced system  has entropy differing from that of the full system by a multiplicative factor equal to the measure of $A_0\,$.   Thus, the entropy of $(\mathcal T_{\alpha}, \Omega_{\alpha},\overline{ \mathcal B}_{\alpha}, \mu\,)$ is less than that of the system of index $1/2\,$.   

Similarly,  if $k >k'\,$, we form a new system  by inducing past a copy of $D_0$ in  the index $1/2$ system.    This allows us to conclude that  the entropy of $(\mathcal T_{\alpha}, \Omega_{\alpha},\overline{ \mathcal B}_{\alpha}, \mu\,)$ is greater than that of the system of index $1/2\,$.   
\end{proof} 
 
The following is part of Theorem 2.9 of \cite{DKS}.
%%%%%%%%%%%%%%%%%%%%%%%%%%%%%%%%%% 
\begin{Lem}\label{staggeredOrbits}\rm{(Dajani {\em et al} \cite{DKS})}   For  odd $q$  and  $\alpha \in (\omega_0, 1/\lambda\,]$
there are distinct natural numbers $k, k'$ such that   $l_k = r_{k'}\,$ and that the $T_{\alpha}$-digits of 
$l_{k-1} = r_{k'-1}\,$ differ by one.   
\end{Lem}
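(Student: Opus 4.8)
The statement is precisely (part of) Theorem 2.9 of \cite{DKS}, so the shortest honest route is simply to quote that result. The plan I would instead follow is to indicate how it can be recovered from the orbit computations already developed here, so that the nature of the transition at $\alpha = \omega_0$ becomes transparent and matches the bookkeeping of the synchronized regime.

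The strategy is to track the $T_\alpha$-expansions of $l_0 = (\alpha-1)\lambda$ and of $-r_0 = -\alpha\lambda$ as $\alpha$ increases past $\omega_0$, exactly as in Lemma~\ref{lem: alpha_0 odd}. First I would pin down the meaning of $\omega_0$: it should be the value of $\alpha$ at which an orbit point of $-r_0$ (equivalently of $l_0$) crosses a cylinder boundary, so that the symmetric initial pattern $[(-1:1)^h,(-1:2),(-1:1)^h,\dots]$ governing the synchronized range $\alpha\in(\alpha_0,1/2)$ — and, via \cite{DKS}, the range $(1/2,\omega_0)$ where $\mathcal T^{2h+2}(A_0)=\mathcal T^{2h+2}(D_0)$ — breaks down. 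Concretely, I expect $\omega_0$ to be characterized as the root of a quadratic obtained by setting the relevant orbit point equal to $\pm\delta_1$ or $\pm\delta_2$, just as $\alpha_0$ arose from the inequality $T_\alpha^{2h}(-r_0) < -\delta_1$; squaring the defining equation should return $\omega_0 = (\lambda-2+\sqrt{\lambda^2-4\lambda+8})/(2\lambda)$ as the admissible root of $\lambda^2\alpha^2-(\lambda^2-2\lambda)\alpha-1=0$.

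Next, using the Hecke-group matrix identities — the products $S^{-1}U^{\pm m}S$ together with the relations \eqref{eq: B relations odd} among the $B_n$ — I would express $l_k$ and $r_{k'}$ as explicit M\"obius images of $l_0$ and $r_0$ for the shifted indices that the now-broken symmetry dictates. The crucial computation is the analogue of the identity
\[ \left|\frac{1}{l_{k-1}}\right| - \left|\frac{1}{r_{k'-1}}\right| = \lambda \]
that appeared in the synchronized odd-index lemma above. Once this is in hand, $T_\alpha(l_{k-1}) - T_\alpha(r_{k'-1})$ is an integer multiple of $\lambda$; since it is the difference of two points of the length-one interval $\mathbb{I}_\alpha$, that multiple must vanish, which simultaneously forces $l_k = r_{k'}$ and pins the two governing digits to differ by exactly one. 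The essential new feature, distinguishing this regime from the synchronized one, is that here $k \neq k'$.

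The main obstacle is the bookkeeping of the staggering: determining the correct pair $(k,k')$ and verifying that all intermediate orbit points of $l$ and of $r$ land in the cylinders required to validate the matrix products on the full range $(\omega_0, 1/\lambda]$. In the synchronized case both orbits enjoyed a clean reflection symmetry about $\varphi_h$ that produced equal indices; past $\omega_0$ this symmetry fails, and one must follow the resulting asymmetry step by step, checking at each stage that no further unexpected boundary crossing occurs. This case analysis of digit strings, rather than any single calculation, is the delicate part, and it is exactly what \cite{DKS} carries out in proving their Theorem 2.9.
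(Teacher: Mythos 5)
Your proposal coincides with the paper's treatment: the paper gives no proof of this lemma at all, simply quoting it as part of Theorem 2.9 of \cite{DKS}, which is exactly the primary route you identify, and your supplementary sketch of a direct argument mirrors the paper's own synchronized-case computations (the matrix identities, the $\left|1/l\right| - \left|1/r\right| = \lambda$ trick, and the "integer multiple of $\lambda$ must vanish" step). One small correction to that sketch: $\mathbb I_{\alpha}$ has length $\lambda$, not one (the length-one phrasing only applies to the classical case $\lambda=1$), though the conclusion is unaffected since the difference of two of its elements is still smaller than $\lambda$ in absolute value.
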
 
%%%%%%%%%%%%%%%%%%%%%%%%%%%%%%%%%% 

%%%%%%%%%%%%%%%%%%%%%%%%%%%%%%%%%% 
\begin{Cor}   For  odd $q$  and  $\alpha \in (\omega_0, 1/\lambda\,]\,$,  the maps 
 $T_{\alpha}$ and 
$ T_{1/2} $ have distinct entropy values.
\end{Cor}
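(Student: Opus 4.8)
The plan is to combine the two preceding lemmas. Lemma~\ref{staggeredOrbits} supplies, for odd $q$ and $\alpha \in (\omega_0, 1/\lambda\,]$, distinct natural numbers $k, k'$ with $l_k = r_{k'}$ and with the $T_\alpha$-digits of $l_{k-1}$ and $r_{k'-1}$ differing by one; Lemma~\ref{quiltNotSameNotIso} then converts a two-dimensional coincidence $\mathcal T^k_\alpha(A_0) = \mathcal T^{k'}_\alpha(D_0)$ into the desired inequality of entropies. The task is therefore to bridge these: to promote the one-dimensional orbit statement of Lemma~\ref{staggeredOrbits} to the two-dimensional region equality required by Lemma~\ref{quiltNotSameNotIso}.

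First I would invoke Lemma~\ref{staggeredOrbits} to fix $k \neq k'$ with $l_k = r_{k'}$ and $d_\alpha(l_{k-1}) = d_\alpha(r_{k'-1}) \pm 1$. Next comes the central step. Recall that $A_0$ fibers over an interval abutting the left endpoint $l_0 = (\alpha-1)\lambda$, while $D_0$ fibers over an interval abutting the right endpoint $r_0 = \alpha\lambda$; under iteration by $\mathcal T_\alpha$ the horizontal extents of $\mathcal T^j_\alpha(A_0)$ and $\mathcal T^j_\alpha(D_0)$ are governed by $l_j$ and $r_j$, respectively. Tracking these orbits exactly as in the successful-quilting arguments for even and odd index, the coincidence $l_k = r_{k'}$ forces the $x$-extents of $\mathcal T^k_\alpha(A_0)$ and $\mathcal T^{k'}_\alpha(D_0)$ to agree, and the fact that the digits $d_\alpha(l_{k-1})$ and $d_\alpha(r_{k'-1})$ differ by exactly one makes the fibers stack up correctly, by the same cylinder relations---encoded in the $L_j$ and $H_i$---used in Section~\ref{SecOdd}. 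Hence $\mathcal T^k_\alpha(A_0) = \mathcal T^{k'}_\alpha(D_0)$ with $k \neq k'$.

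With this region equality in hand, I would apply Lemma~\ref{quiltNotSameNotIso} directly: inducing past a single copy of $A_0$ (when $k < k'$) or of $D_0$ (when $k > k'$) produces a system isomorphic to the index-$1/2$ system but of strictly smaller or larger measure, so by the Abramov formula the entropy of $(\mathcal T_\alpha, \Omega_\alpha, \overline{\mathcal B}_\alpha, \mu)$ differs from that of $(\mathcal T_{1/2}, \Omega_{1/2}, \overline{\mathcal B}_{1/2}, \mu)$ by a nontrivial multiplicative factor, namely $\mu(A_0)$ or $\mu(D_0)$. Since a system and its natural extension share the same entropy, the entropy of $T_\alpha$ differs from that of $T_{1/2}$, as claimed.

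The main obstacle is precisely the bridge in the second paragraph: Lemma~\ref{staggeredOrbits} is a one-dimensional statement about endpoint orbits, whereas Lemma~\ref{quiltNotSameNotIso} demands the two-dimensional equality of iterated regions. Confirming that the fibers genuinely match---not merely the $x$-projections---requires the same careful bookkeeping of cylinder boundaries carried out in Sections~\ref{SecEven} and~\ref{SecOdd}, and one must check that the off-by-one condition on the digits is exactly what aligns the staggered fibers. This is more a matter of reusing established computations than of fresh difficulty, but it is the step where the argument could break down were the digit shift of the wrong sign or magnitude.
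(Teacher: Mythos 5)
Your proposal follows the paper's own proof exactly: the paper likewise cites Lemma~\ref{staggeredOrbits} to obtain $\mathcal T^{k}_{\alpha}(A_0) = \mathcal T^{k'}_{\alpha}(D_0)$ with $k \neq k'$ (leaving the promotion from the one-dimensional orbit coincidence to the two-dimensional region equality as a routine repetition of the quilting computations of Sections~\ref{SecEven} and~\ref{SecOdd}), and then concludes via Lemma~\ref{quiltNotSameNotIso} and the Abramov formula. Your elaboration of the fiber-matching step is a faithful filling-in of what the paper compresses into ``one can show,'' not a different route.
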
 
%%%%%%%%%%%%%%%%%%%%%%%%%%%%%%%%%% 

\begin{proof}   From  Lemma ~\ref{staggeredOrbits},  one can show that $\mathcal T^{k}_{\alpha}(A_0) = \mathcal T^{k'}_{\alpha}(D_0)\,$.   Lemma ~\ref{quiltNotSameNotIso} then applies.
\end{proof}

In fact,   Theorem 2.9 of \cite{DKS} shows that our  $k' = k+1\,$; thus,  from the proof of Lemma ~\ref{quiltNotSameNotIso}, we see that the entropy of $T_{\alpha}$ for $\alpha \in (\omega_0, 1/\lambda\,]$ is decreasing, confirming the results for the case $q=3$ of \cite{N}, see also \cite{LM}.

\end{document}